\numberwithin{equation}{section}
\renewcommand\bar\overline
\newtheorem{thm}{Theorem}[section]
\newtheorem{lemma}[thm]{Lemma}
\newtheorem{prop}[thm]{Proposition}
\theoremstyle{definition}
\theoremstyle{remark}
\begin{document}

\title{A proof of the positive density conjecture for integer Apollonian circle packings}
\author{Jean Bourgain}
\address{Institute for Advanced Study, School of Mathematics, Einstein Drive, Princeton, NJ 08540 USA}
\thanks{The first author is supported in part by NSF}
\keywords{Apollonian packings, number theory, quadratic forms, sieve methods, circle method}
\email{bourgain@math.ias.edu}
\author{Elena Fuchs}
\address{Princeton University, Department of Mathematics, Fine Hall, Washington Rd, Princeton, NJ 08544-100 USA}
\email{efuchs@math.princeton.edu}

\begin{abstract}
A bounded Apollonian circle packing (ACP) is an ancient Greek construction which is made by repeatedly inscribing circles into the triangular interstices in a Descartes configuration of four mutually tangent circles.  Remarkably, if the original four circles have integer curvature, all of the circles in the packing will have integer curvature as well.  In this paper, we compute a lower bound for the number $\kappa(P,X)$ of integers less than $X$ occurring as curvatures in a bounded integer ACP $P$, and prove a conjecture of Graham, Lagarias, Mallows, Wilkes, and Yan that the ratio $\kappa(P,X)/X$ is greater than $0$ for $X$ tending to infinity.

\end{abstract}

\maketitle
\section{Introduction}\label{Intro}

In the first picture in Figure~\ref{empty} there are three mutually tangent circles packed in a large circle on the outside, with four curvilinear triangles inbetween.  By an old theorem (circa 200 BC) of Apollonius of Perga, there are precisely two circles tangent to all of the circles in a triple of mutually tangent circles.  One can therefore inscribe a unique circle into each curvilinear triangle as in the second picture in Figure~\ref{empty}. Since this new picture has many new curvilinear triangles, we can continue packing circles in this way -- this process continues indefinitely, and we thus get an infinite packing of circles known as the Apollonian circle packing (ACP).  Since the radii of most of the circles in an ACP are extremely small, it is convenient to measure the circles in the packing via their {\it curvatures}, or the reciprocals of the radii, instead.
\begin{figure}[htp]
\centering
\includegraphics[height = 30 mm]{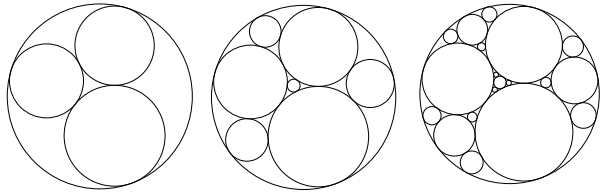}
\caption{Apollonian Circle Packings}\label{empty}
\end{figure}

A remarkable feature of these packings is that, given a packing in which the initial four mutually tangent circles have integer curvature, all of the circles in the packing will have integer curvature as well.  Such a packing is called an integer ACP and is illustrated in Figure~\ref{Drawing} with the packing generated by starting with circles of curvatures $1, 2, 2,$ and $3$.

There are various problems associated to the diophantine properties of integer ACP's which are addressed in \cite{Apollo} by the five authors Graham, Lagarias, Mallows, Wilks, and Yan.  They make considerable progress in treating the problem, and ask several fundamental questions many of which are now solved and discussed further in \cite{sl}, \cite{Fuchs}, \cite{FS}, \cite{KO}, and \cite{maa}.

In most of these papers, ACP's are studied using a convenient representation of the curvatures appearing in an ACP as maximum-norms of vectors in an orbit of a specific subgroup $A$ of the orthogonal group $\textrm{O}(3,1)$.  We introduce this group in Section~\ref{agroup} and will use it throughout.  In regards to counting the number of integers represented in a given ACP, Graham et. al. exploit the existence of unipotent elements of $A$ in \cite{Apollo} to establish the lower bound below for the number $\kappa(P,X)$ of distinct curvatures less than $X$ of circles in an integer packing $P$:
\begin{equation}\label{apbound}
\kappa(P,X)\gg \sqrt{X}
\end{equation}
where the notation 
$$y \gg_{\beta} z\; \mbox{ or }\; y\ll_{\beta} z$$
in this paper is taken to mean that there exists a constant $c>0$ depending only on $\beta$ such that 
$$y\geq cz\; {\mbox{ or, respectively }}\; y\leq cz.$$
Graham et. al. suggest in \cite{Apollo} that the lower bound in (\ref{apbound}) can be improved.  In fact, they conjecture that the integers represented as curvatures in a given ACP actually make up a positive fraction of the positive integers $\mathbb N$.

It is important to note that this question is different from one recently addressed in \cite{KO} by Kontorovich and Oh about the number $N_P(X)$ of circles in a given packing $P$ of curvature less than $X$.  This involves counting curvatures appearing in a packing with multiplicity, rather than counting every integer which comes up exactly once as we do in this paper.  In fact, the results in \cite{KO} suggest that the integers occurring as curvatures in a given ACP arise with significant multiplicity.  Specifically, Kontorovich and Oh find that $N_P(X)$ is asymptotic to $c\cdot x^{\delta}$, where $\delta= 1.3056\dots$ is the Hausdorff dimension of the limit set of the packing.  
Kontorovich and Oh's techniques, however, do not extend in any obvious way to proving that the integers represented by curvatures in an ACP make up a positive fraction in $\mathbb N$.

In \cite{sl} Sarnak uses the existence of arithmetic Fuchsian subgroups of $A$ to get a bound of
\begin{equation}\label{sarnakbound}
\kappa(P,X)\gg \frac{X}{\sqrt{\log{X}}}
\end{equation}
towards Graham.et.al.'s positive density conjecture.  This method, which we summarize in Section~\ref{prlow}, was further improved to yield a bound of
$$\kappa(P,X)\gg \frac{X}{({\log{X}})^{\epsilon}}$$
where $\epsilon= 0.150\dots$ by the second author in a preprint \cite{Fuchspreprint}.

\begin{figure}[htp]
\centering
\includegraphics[height = 70 mm]{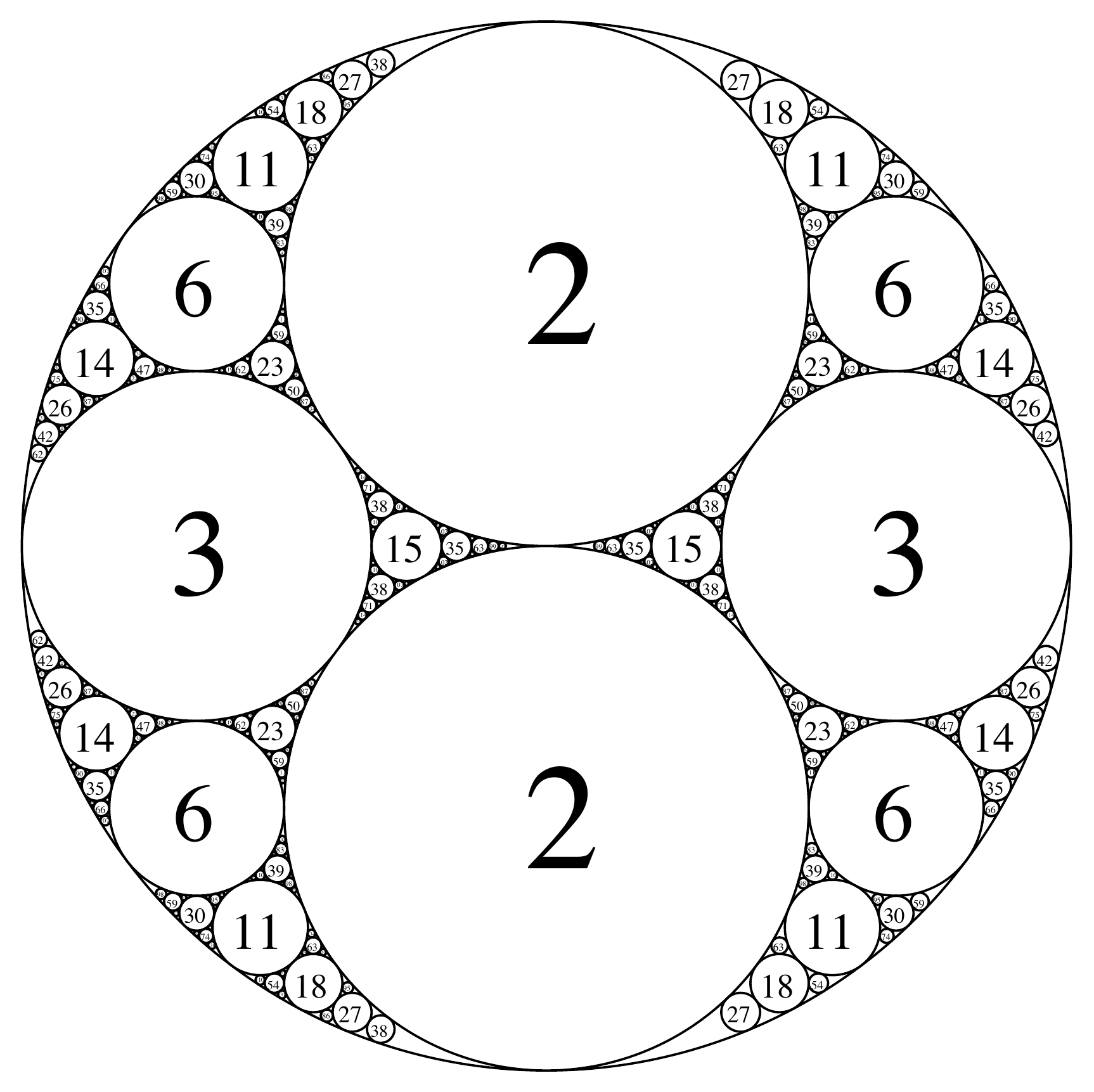}
\caption{Apollonian Circle Packing $(-1,2,2,3)$}\label{Drawing}
\end{figure}

In this paper, we refine this Fuchsian subgroup method in a number of ways and settle the positive density question of Graham et.al. in the following theorem:
\begin{thm}\label{iteratedens}
For an integer Apollonian circle packing $P$, let $\kappa(P, X)$ denote the number of distinct integers up to $X$ occurring as curvatures in the packing.  Then for $X$ large we have
$$\kappa(P, X) \gg X $$
where the implied constant depends on the packing $P$.
\end{thm}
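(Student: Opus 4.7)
The plan is to refine and strengthen the Fuchsian subgroup method behind Sarnak's bound (\ref{sarnakbound}). The starting point is the observation, exploited in \cite{sl}, that $A$ contains stabilizers of individual circles in $P$ which act as arithmetic Fuchsian groups: fixing a circle $C_0 \in P$ of curvature $a_0$, the curvatures of circles tangent to $C_0$ are precisely values of the form $f_{C_0}(x,y) - a_0$ for a positive definite integral binary quadratic form $f_{C_0}$, applied to certain admissible pairs $(x,y)$. Landau's theorem on sums of two squares immediately gives $\gg X/\sqrt{\log X}$ such values up to $X$ from a single circle. To reach positive density, I would vary the base circle and consider the collection $\{f_C - a_C\}$ as $C$ ranges over a suitable large family $\mathcal{F}$ of circles in $P$, and then study the representation function
\begin{equation*}
\mathcal{R}(n) = \#\bigl\{(C,x,y) : C \in \mathcal{F},\; f_C(x,y) - a_C = n,\; (x,y)\in \Omega_C\bigr\}
\end{equation*}
where $\Omega_C$ is an appropriate admissible set (typically a box scaled to the size of $C$).

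Next I would apply the standard Cauchy--Schwarz lower bound
\begin{equation*}
\kappa(P,X) \;\geq\; \#\{n\leq X : \mathcal{R}(n)>0\} \;\geq\; \frac{\bigl(\sum_{n\leq X}\mathcal{R}(n)\bigr)^{2}}{\sum_{n\leq X}\mathcal{R}(n)^{2}}.
\end{equation*}
The numerator is the first moment, which one can lower bound by $\gg X |\mathcal F|$ by counting lattice points in the ellipses $f_C \leq X$; the denominator is the second moment, equal to the number of quadruples $(C_1,x_1,y_1,C_2,x_2,y_2)$ for which $f_{C_1}(x_1,y_1)-a_{C_1} = f_{C_2}(x_2,y_2)-a_{C_2}$. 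Hence the task reduces to showing that this additive energy is $\ll X|\mathcal F|^{2}$, with an implied constant independent of $|\mathcal F|$.

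To control the additive energy I would run a Hardy--Littlewood circle method argument. The forms $f_C$ vary with $C$, so the Fourier transforms $\widehat{1_{\{f_C\leq X\}}}$ are classical oscillatory integrals that can be estimated on major and minor arcs of $\mathbb{R}/\mathbb{Z}$. On the major arcs one gets the expected main term governed by a singular series involving the local densities at each prime; one must verify there is no congruence obstruction forcing all but a thin set of integers to be excluded, which comes down to understanding the reduction of the Apollonian group modulo primes (already studied in \cite{Fuchs}). On the minor arcs one needs cancellation in exponential sums attached to the quadratic forms $f_C$, averaged over $C\in\mathcal{F}$. The family structure here is essential: individual forms give only square-root cancellation, but averaging over $C$ together with suitable smoothing allows the minor-arc bound to beat the major-arc main term.

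The main obstacle, as in any such application of the circle method to non-abelian thin orbit problems, is the minor-arc bound. Unlike the classical setting where $\mathcal F$ is a single form, here the circles $C \in \mathcal{F}$ are themselves constrained to lie in the thin Apollonian orbit, so one cannot freely average. I expect the proof to choose $\mathcal F$ as the set of circles at some intermediate generation of the packing (so $|\mathcal F|$ is a positive power of $X$), and to exploit spectral input on the Apollonian group---an expander/Ramanujan-type bound on the relevant Laplacian or transfer operator---to gain the extra saving on the minor arcs. Once this is in place, combining the major-arc asymptotic with the minor-arc saving yields $\sum \mathcal{R}(n)^{2} \ll X|\mathcal F|^{2}$, and Cauchy--Schwarz completes the proof.
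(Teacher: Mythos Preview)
Your overall architecture---associate to each base circle $C$ a shifted binary form $f_C - a_C$, vary $C$ over a family, and compare a first moment to a second moment---matches the paper's strategy. However, several of your key technical guesses are off, and they point toward a much harder proof than the one actually given.

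The most important divergence is the choice of family. You propose $\mathcal{F}$ to be circles at an intermediate generation with $|\mathcal{F}|$ a positive power of $X$, and you anticipate needing spectral/expander input to handle minor-arc sums after averaging over $\mathcal{F}$. The paper does something far more modest: it fixes one circle $C_0$ and takes as its family the circles tangent to $C_0$ whose curvatures $a$ lie in $[(\log X)^2,(\log X)^3]$, further thinned to short subintervals of relative density $\eta$ (a small fixed parameter). Thus the family is only polylogarithmic in $X$, and the discriminants $-4a^2$ of the forms $f_a$ are at most $(\log X)^6$. With discriminants this small, the uniform lower bound $|S_a|\gg X/a$ follows from Blomer--Granville \cite{bg}, and for each \emph{fixed} pair $(a,a')$ the count of $(x,y,x',y')$ with $f_a(x,y)-f_{a'}(x',y')=a-a'$ is evaluated directly by the circle method for a single quaternary form (Heath-Brown \cite{HB}, Niedermowwe \cite{Niedermowwe}); the error term is negligible precisely because the discriminant is only a power of $\log X$. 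No averaging over the family on minor arcs, and no spectral gap for the Apollonian group, enters at all.

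The second point you miss is the role of $\eta$. The paper proves $\sum_a |S_a|\gg \eta X$ while $\sum_{a\neq a'}|S_a\cap S_{a'}|\leq c\eta^2 X$; the inclusion--exclusion bound $|\bigcup_a S_a|\geq \sum_a|S_a|-\sum_{a\neq a'}|S_a\cap S_{a'}|$ then gives $\gg(\eta-c\eta^2)X$, and one takes $\eta$ small. Your Cauchy--Schwarz variant is in the same spirit, but your first-moment claim ``$\gg X|\mathcal{F}|$'' is not right once the curvatures $a_C$ grow: the ellipse $f_C\leq X$ contains $\asymp X/a_C$ lattice points, so the first moment is $\sum_C X/a_C$, not $X|\mathcal{F}|$. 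Balancing this against the pair count is exactly what the $\eta$-trick and the logarithmic range for $a$ accomplish.
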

We treat this question by counting curvatures in different ``subpackings" of an ACP.  Namely, we fix a circle $C_0$ of curvature $a_0$ and investigate which integers occur as curvatures of circles tangent to $C$.  This gives the preliminary lower bound in (\ref{sarnakbound}) which was first proven by Sarnak in \cite{sl}.   The essential observation which leads to this lower bound is that the set of integers appearing as curvatures of circles tangent to $C_0$ contain the integers represented by an inhomogeneous binary quadratic form
$$f_{a_0}(x,y)-a_0$$
of discriminant $-4a_0^2$.  Our approach in Section~\ref{iterate} is to repeat this method for a subset of the circles which we find are tangent to $C_0$ in this way.  For every circle $C$ of curvature $a$ tangent to $C_0$, we can produce a shifted binary quadratic form
$$f_a(x,y)-a$$
where $f_a$ has discriminant $-4a^2$ and consider the integers represented by $f_a$.  We consider $a$ in a suitably reduced subset of $[(\log X)^2,(\log X)^3]$ and count the integers represented by $f_a-a$ for $a$ in this subset.  It is important to note that the integers represented by $f_a$ and $f_{a'}$ for $a\not= a'$ are a subset of integers which can be written as a sum of two squares since both forms have discriminant of the form $-\delta^2$.  In fact, $f_a$ and $f_{a'}$ represent practically the same integers (see \cite{Fuchspreprint} for a more detailed discussion).  It is rather the {\it shift} of each form $f_a$ by $a$ that makes the integers found in this way vary significantly.  Our final step is to give an upper bound on the number of integers in the intersection
$$\{m\mbox{ represented by }f_a-a\}\cap\{m'\mbox{ represented by }f_{a'}-a'\}$$
In obtaining this upper bound, we count integers with multiplicity, which is a sacrifice we can afford to make for our purposes.  This method leads to a proof of the conjecture of Graham et.al. that the integers appearing as curvatures in a given integer ACP make up a positive fraction of all integers.\smallskip

{\bf Acknowledgements:}  We thank Peter Sarnak and Jeffrey Lagarias for introducing this problem to us, and for many insightful comments and conversations.  We thank Alex Kontorovich for sharing his program for drawing Apollonian circle packings.

\subsection{The Apollonian group}\label{agroup}
In 1643, Descartes discovered that the curvatures of any four externally cotangent circles of curvatures $x_1$, $x_2$, $x_3$, and $x_4$ satisfy the equation
\begin{equation}\label{descartes}
Q(x_1, x_2, x_3, x_4) = 2(x_1^2 + x_2^2 +x_3^2+x_4^2) - (x_1+x_2+x_3+x_4)^2.
\end{equation}
A proof of this can be found in \cite{Cx}.  This equation also holds for four mutually tangent circles when three of the circles are internally tangent (or inscribed) to the fourth circle as in the case of ACP's.  However, because the outside circle is internally tangent to the other three, we assign to it a negative curvature in order for the equation to hold.  We note that Descartes' theorem allows us to solve a quadratic equation for the fourth curvature once three are known (this is one way to verify Apollonius's theorem about circles tangent to a triple of mutually tangent circles).  

We thus assign to every set of $4$ mutually tangent circles in a packing $P$ a vector $\mathbf v\in \mathbb Z^4$ of the circles' curvatures, and use Descartes' equation to express any ACP as the orbit of a subgroup $A$ of the orthogonal group $\textrm{O}_Q(\mathbb Z)$ acting on $\mathbf v$.  In \cite{Apollo}, Graham et. al. describe an algorithm for generating a {\it root quadruple} $\mathbf v$, which corresponds to the four largest circles in the packing, and is uniquely determined for any given ACP.  From this point on, we denote by $\mathbf v$ the root quadruple of the packing in question, and assume that the packing $P$ is primitive -- the gcd of the coordinates of any $w\in A\mathbf v$ in the orbit is $1$.  The group $A$, called the Apollonian group, was introduced by Hirst in 1967 (see \cite{Hirst}).  It is a group on the four generators
 \begin{equation}\label{gens}
 \small{
S_1=\left(
\begin{array}{llll}
-1&2&2&2\\
0&1&0&0\\
0&0&1&0\\
0&0&0&1\\
\end{array}
\right)\quad
S_2=\left(
\begin{array}{llll}
1&0&0&0\\
2&-1&2&2\\
0&0&1&0\\
0&0&0&1\\
\end{array}
\right)}
\end{equation}
\begin{equation*}
 \small{
S_3=\left(
\begin{array}{cccc}
1&0&0&0\\
0&1&0&0\\
2&2&-1&2\\
0&0&0&1\\
\end{array}
\right)\quad
S_4=\left(
\begin{array}{cccc}
1&0&0&0\\
0&1&0&0\\
0&0&1&0\\
2&2&2&-1\\
\end{array}
\right),}
\end{equation*}
derived by fixing all but one of the $x_i$ in (\ref{descartes}) and solving $Q(x_i)=0$ for the fourth.  Note that each $S_i$ is of order $2$ and determinant $-1$, and that left multiplication of $S_i$ by a vector $\mathbf w=(w_1,w_2,w_3,w_4)^{\textrm T}\in \mathbb Z^4$ fixes three of the coordinates of $\mathbf w$.  We denote by $\mathcal O = A \mathbf v$ the orbit of $A$ acting on $\mathbf v$, and note that our results hold for an arbitrary $\mathbf v$, as they are based on properties of the group $A$, and do not depend on the specific orbit in question.  Since the quadratic form in (\ref{descartes}) is of signature $(3,1)$ over $\mathbb R$, we have that $A$ is a subgroup of $\textrm{O}(3,1)$ and can be thought of as a subgroup of the group of motions of hyperbolic $3$-space $\mathbb H^3$.  In this way $A$ is a discrete algebraic group acting on $\mathbb H^3$ where the complement of three mutually tangent hemispheres inside an infinite cylinder is the fundamental domain of the action.  This fundamental domain has infinite volume, which makes counting integers in the group's orbit quite difficult.  We note, however, that $A$ contains Fuchsian triangle subgroups generated by any three of the $S_i$ above, which are lattices in the corresponding $\textrm{O}(2,1)$'s.  We use this fact extensively throughout this paper.

To this end, denote by $A_i$ the subgroup of $A$ generated by three of the four generators as follows:
$$A_i:= \left(\{S_1,S_2,S_3,S_4\}-\{S_i\}\right).$$
This group is the Schottky group generated by reflections in the three circles intersecting the $i$th circle in the root quadruple and perpendicular to the initial circles in the packing; in particular, the $i$th circle is fixed under this action.  The fundamental domain of $A_i$ is then a triangle bounded by the three circles, and has hyperbolic area $\pi$.

\section{A preliminary lower bound}\label{prlow}

In this section, we follow \cite{sl} in order to count integer points in an orbit of a subgroup $A_i$ of the Apollonian group as described in Section~\ref{agroup}.  This produces a preliminary lower bound on the number $\kappa(P,X)$ of integers less than $X$ occurring as curvatures in an Apollonian packing $P$.
\begin{prop}\label{lowbound}
For an integer Apollonian circle packing $P$, let $\kappa(P, X)$ denote the number of distinct integers less than $X$ occurring as curvatures in the packing.  Then we have
$$\kappa(P, X) \gg {{X}\over{\sqrt{\log X}}}.$$
\end{prop}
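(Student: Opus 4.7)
My plan is to realize a large family of curvatures of $P$ as shifted values of a fixed positive definite binary quadratic form of discriminant $-4a_0^2$, then invoke the classical Landau-type density theorem for integers representable by such a form.

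First, fix a circle $C_0$ in $P$ of curvature $a_0$, chosen so that $\mathbf v = (a_0, b_0, c_0, d_0)^{\mathrm T}$ is a permutation of the root quadruple. Because $S_2, S_3, S_4$ each fix the first coordinate of any vector they act on, every element of the orbit $A_1 \mathbf v$ with $A_1 = \langle S_2, S_3, S_4\rangle$ is a quadruple of curvatures of four mutually tangent circles all touching $C_0$. The second coordinate of any $w \in A_1\mathbf v$ is therefore a curvature appearing in $P$, and it suffices to bound from below the number of distinct such second coordinates in $[1, X]$.

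Restricting the Descartes form to the affine slice $\{x_1 = a_0\}$, the condition $Q(a_0, x_2, x_3, x_4) = 0$ cuts out a conic on which $A_1$ acts as a finite-index subgroup of the integral orthogonal group of a ternary quadratic form of signature $(2, 1)$. Solving $Q(a_0, b, c, d) = 0$ for $b$ in terms of $(c, d)$ and completing the square exhibits
\begin{equation*}
b + a_0 = f_{a_0}(m, n)
\end{equation*}
for suitable integer parameters $(m, n)$, where $f_{a_0}$ is a primitive positive definite binary quadratic form of discriminant $-4a_0^2$. Via the exceptional isomorphism $\mathrm{Spin}(2, 1) \simeq \mathrm{PSL}_2$, the group $A_1$ projects to a congruence-type subgroup of $\mathrm{SL}_2(\mathbb Z)$, and strong approximation implies that as $w$ runs over $A_1\mathbf v$, the pair $(m, n)$ attains every point of a full sublattice of $\mathbb Z^2$. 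Consequently every integer of the form $f_{a_0}(m, n) - a_0$ with $(m, n)$ in this sublattice occurs as a curvature in $P$.

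Finally, the Landau-Ramanujan theorem, extended by Bernays to arbitrary primitive positive definite binary quadratic forms, gives $\gg_D X/\sqrt{\log X}$ distinct integers up to $X$ represented by any such form of discriminant $-D$; restricting the variables to a sublattice of fixed index and translating by the constant $-a_0$ does not change this asymptotic order. This yields $\kappa(P, X) \gg X/\sqrt{\log X}$ with implied constant depending only on $a_0$, hence only on $P$. The chief obstacle is not the counting step, which is standard, but the arithmetic identification of $A_1$ and the verification that its orbit on the conic $Q(a_0, \cdot) = 0$ surjects onto a positive-density subset of the integer representations of $f_{a_0}$; this is where Sarnak's use of strong approximation on the spin cover is essential.
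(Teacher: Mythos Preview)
Your strategy is exactly the paper's: fix $C_0$, pass to the $A_1$-orbit, recognize the slice $\{x_1=a_0\}$ as an $\mathrm{O}(2,1)$-conic, lift via the spin cover to $\mathrm{PGL}_2$, identify curvatures tangent to $C_0$ with values $f_{a_0}(\zeta,\nu)-a_0$ of a shifted binary form of discriminant $-4a_0^2$, and invoke a Landau--Bernays--James count. So there is no divergence in approach.

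The one place your sketch is not yet a proof is the sentence invoking ``strong approximation'' to conclude that $(m,n)$ ranges over ``every point of a full sublattice of $\mathbb Z^2$''. Two issues. First, strong approximation is not the mechanism here, and it is not what Sarnak or the paper uses. The paper makes two explicit changes of variable to conjugate $A_1$ to a subgroup $\Gamma'\subset \mathrm{O}_\Delta(\mathbb Z)$ (with $\Delta$ the discriminant form), then checks by hand that the preimage of $\Gamma'\cap\mathrm{SO}_\Delta(\mathbb Z)$ in $\mathrm{PGL}_2(\mathbb Z)$ contains the standard generators $\begin{pmatrix}1&0\\-2&1\end{pmatrix}$, $\begin{pmatrix}1&-2\\0&1\end{pmatrix}$ of $\Lambda(2)$, and finally compares hyperbolic areas ($\pi$ for $A_1\backslash\mathbb H^2$, hence $2\pi$ for the index-two even-length subgroup, matching $2\pi$ for $\Lambda(2)\backslash\mathbb H^2$) to conclude the preimage \emph{equals} $\Lambda(2)$. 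This is an exact identification, not a local-global argument; strong approximation would only tell you about congruence closures and does not by itself pin down the group. Second, the conclusion is not that $(m,n)$ fills a sublattice: because the $\Lambda(2)$-action on a form $A\zeta^2+2B\zeta\nu+C\nu^2$ is by $\mathrm{SL}_2(\mathbb Z)$ substitutions, the values obtained are $f_{a_0}(\zeta,\nu)$ over \emph{coprime} pairs $(\zeta,\nu)$, not over a lattice. This is harmless for the final count (primitive representations already give $\gg X/\sqrt{\log X}$), but the ``sublattice'' description is incorrect and should be replaced by the coprimality condition.
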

\begin{proof}
We fix a circle $C_0$ of non-zero curvature $a_0$ in the packing $P$, and count the integers which occur as curvatures of circles tangent to $C_0$ in $P$.  This is identical to considering the orbit of $A_1$ acting on a quadruple $\mathbf v$ of mutually tangent circles $(a_0,b,c,d)$, since $A_1$ fixes the first coordinate of $\mathbf v$ and its orbit represents all of the circles tangent to $C_a$.  Note that $A$ generates all possible Descartes configurations in the packing $P$, and there can only be finitely many circles of curvature $a_0$ in the packing since the total area of all the inscribed circles is bounded by the area of the outside circle.  Therefore it is reasonable to count the circles represented in the orbit of $A_1$, since they make up a positive fraction of all of the circles in $P$ tangent to a circle of curvature $a_0$.

In this orbit, we have that the first coordinate $a_0$ is fixed, and the other coordinates of points in the orbit of $A_1$ vary to satisfy
$$Q(a_0,x_2, x_3, x_4)=2(a_0^2+x_2^2+x_3^2+x_4^2)-(a_0+x_2+x_3+x_4)^2=0,$$
where $Q$ is the Descartes form in (\ref{descartes}).  A change of variables $\mathbf y=(y_2,y_3,y_4)=(x_2,x_3,x_4)+(a_0,a_0,a_0)$ allows us to rewrite the equation above as
\begin{equation}\label{firstchange}
g(\mathbf y)+4a_0^2=0,
\end{equation}
where $g(\mathbf y)= y_2^2+y_3^2+y_4^2-2y_2y_3-2y_2y_4-2y_3y_4$ is the resulting ternary quadratic form.  We can thus conjugate the action of $A_1$ on $(a_0,x_2, x_3,x_4)$ to an action independent of $a_0$ which preserves the form $g$.  This is the action of a group $\Gamma$ on $\mathbf y$, generated by
\[\small{
\left(
\begin{array}{lll}
-1&2&2\\
0&1&0\\
0&0&1\\
\end{array}
\right),
\left(
\begin{array}{lll}
1&0&0\\
2&-1&2\\
0&0&1\\
\end{array}
\right),
\left(
\begin{array}{ccc}
1&0&0\\
0&1&0\\
2&2&-1\\
\end{array}
\right).}
\]
Moreover, the action of $\Gamma$ on
$$\mathbf v'=(b+a_0,c+a_0,d+a_0)$$
is related to the action of $A_1$ on $\mathbf v$ by
$$A_1\mathbf v=(a_0,\Gamma \,[\mathbf v'-(a_0,a_0,a_0)]),$$
so we count the same number of curvatures occurring in the packing before and after this change of variables.  We change variables once again by letting
$$y_2=A,\;y_3=A+C-2B,\; y_4=C.$$
We note that $(y_2,y_3,y_4)\in\mathbb Z^3$ implies that $A,B,$ and $C$ are integers, and the primitivity of the packing is preserved as well -- the gcd of $A, B$, and $C$ is $1$.  With this change of variables, $\Gamma$ is conjugated to an action of a group $\Gamma'$ on $(A,B,C)$ which is generated by
\[\small{
\left(
\begin{array}{lll}
1&-4&4\\
0&-1&2\\
0&0&1\\
\end{array}
\right),
\left(
\begin{array}{lll}
1&0&0\\
0&-1&0\\
0&0&1\\
\end{array}
\right),
\left(
\begin{array}{ccc}
1&0&0\\
2&-1&0\\
4&-4&1\\
\end{array}
\right).}
\]
Under this change of variables, the expression in (\ref{firstchange}) becomes
\begin{equation}\label{secondchange}
4(B^2-AC)=-4a_0^2.
\end{equation}
Letting $\Delta(A,B,C)$ denote the discriminant of the binary quadratic form $Ax^2+2Bxy+Cy^2$, (\ref{secondchange}) is simply  
$$\Delta(A,B,C)=a_0^2,$$
and thus $\Gamma'$ is a subgroup of $\textrm{O}_{\Delta}(\mathbb Z)$, the orthogonal group preserving $\Delta$.  Let $\tilde{\Gamma}$ denote the intersection $\Gamma'\cap \textrm{SO}_{\Delta}(\mathbb Z)$.  The spin double cover of $\textrm{SO}_{\Delta}$ is well known (see \cite{Elstrodt}) to be $\textrm{PGL}_2$, and is obtained via the homomorphism
\begin{equation}\label{homo}
\begin{array}{lcl}\rho:\textrm{PGL}_2(\mathbb Z) &\longrightarrow& \textrm{SO}_{\Delta}(\mathbb Z)\\
{\left(\begin{array}{ll}\alpha&\beta\\ \gamma&\delta\\ \end{array}\right)}&{\stackrel{\rho}{\longmapsto}}&{\frac{1}{\alpha\delta-\beta\gamma}\cdot\left(\begin{array}{ccc}\alpha^2&2\alpha\gamma&\gamma^2\\ \alpha\beta&\alpha\delta+\beta\gamma&\gamma\delta\\ \beta^2&2\beta\delta&\gamma^2\\ \end{array}\right)}\\
\end{array}
\end{equation}
written here over $\mathbb Z$ as this is the situation we work with.  It is natural to ask for the preimage of $\tilde{\Gamma}$ under $\rho$ which we determine in the following lemma.
\begin{lemma}\label{intersection}
Let $\tilde\Gamma$ and $\rho$ be as before.  Let $\Lambda(2)$ be the congruence $2$-subgroup of $\textrm{PSL}_2(\mathbb Z)$.  Then the preimage of $\tilde{\Gamma}$ in $\textrm{PGL}_2(\mathbb Z)$ under $\rho$ is $\Lambda(2)$.
\end{lemma}
\begin{proof}
We can extract from the generators of ${\Gamma}'$ as well as the formula in (\ref{homo}) that the preimage of $\tilde{\Gamma}$ under $\rho$ contains
\[
\left(
\begin{array}{cc}
1&0\\
-2&1\\
\end{array}
\right)
\mbox{ and }
\left(
\begin{array}{cc}
1&-2\\
0&1\\
\end{array}
\right),
\]
and so $\tilde{\Gamma}$ contains the congruence subgroup $\Lambda(2)$ of $\textrm{SL}_2(\mathbb Z)$.  Recall that the area of $A_1\backslash \mathbb H^2$ is $\pi$, and note that $\textrm{SO}_{\Delta}(\mathbb Z)\bigcap \Gamma'$ contains exactly those elements of $\Gamma'$ which have even word length when written via the generators of $\Gamma'$, making up half of the whole group.  Therefore the area of $\tilde{\Gamma}\backslash \mathbb H^2$ is $2\pi$, which is equal to the area of $\Lambda(2)\backslash \mathbb H^2$, and hence the preimage of $\tilde{\Gamma}$ in $\textrm{PGL}_2(\mathbb Z)$ is precisely $\Lambda(2)$ as desired.
\end{proof}
Recall that we would like to count the integer values of $y_2,y_3$, and $y_4$ -- in terms of the action of $\Gamma$, we are interested in the set of values $A, C,$ and $A+C-2B$ above.  Lemma~\ref{intersection} implies that these values contain integers represented by the binary quadratic form
\begin{equation}\label{binquad}
f_{a_0}(\zeta,\nu)= A_0\zeta^2+2B_0\zeta\nu+C_0\nu^2,
\end{equation}
where $(\zeta,\nu)=1$, and the coefficients are derived from the change of variables above:
\begin{equation}\label{coefficients}
A_0=b+a_0,\; C_0=d+a_0,\;B_0={{b+d-c}\over 2}.
\end{equation}
We note that the discriminant of this form is not a square, since
\begin{equation}\label{disc}
D(f_{a_0})= (2B_0)^2-4A_0C_0 = -4a_0^2
\end{equation}
and $a_0\not=0$.  Since the vectors in the orbit of $A_1$ are of the form $(a_0,A-a_0, A+C-2B-a_0,C-a_0)$, they correspond to the integer values of
\begin{equation}\label{shifta}
f_{a_0}(x,y)-a_0,
\end{equation}
where $f_{a_0}$ is as before.  Therefore
\begin{equation}\label{relatekappa}
\kappa(P,X)\gg \#\{m\in \mathbb Z\,|\,m>0,\, f_{a_0}(x,y)-a_0 = m {\mbox{ for some }} x,y\in\mathbb Z, (x,y)=1\}
\end{equation}
and we need only to count the integers represented by $f_{a_0}$ in order to get a bound on the number of curvatures in $P$.  This is done in \cite{james}, from which we have the following:
\begin{lemma}(James): \label{density}
Let $f$ be a positive definite binary quadratic form over $\mathbb Z$ of discriminant $-D$, where $D$ is a positive integer.  Denote by $B_D(X)$ the number of integers less than $X$ represented by $f$.  Then
$$B_D(X)= {{c\cdot X}\over{\sqrt{\log X}}}+O\left({X\over{\log X}}\right),$$
where
$$\pi c^2 = \prod_{\stackrel{q\equiv 3 \; (4)}{q\not| D}}\left(1-{1\over {q^2}}\right)^{-1}\prod_{p|D}\left(1-{1\over p}\right)\sum_{n=1}^{\infty}\left({{-D}\over n}\right)n^{-1}.$$
\end{lemma}
Lemma~\ref{density} paired with (\ref{relatekappa}) implies that
$$\kappa(P,X)\gg \frac{X}{\sqrt{\log X}}$$
as desired.
\end{proof}

\section{Proof of Theorem~\ref{iteratedens}}\label{iterate}
In this section we sharpen the bound in Proposition~\ref{lowbound} in order to answer the question posed by Graham et.al. in \cite{Apollo} and prove that the integers appearing as curvatures in any integer ACP make up a positive fraction of all positive integers.  Our computation in Section~\ref{prlow} reflects only those circles which are tangent to a fixed circle in $P$.  It is thus natural to count some of the omitted curvatures here.  Specifically, we repeat the method from Section~\ref{prlow} several times, fixing a different circle $C$ each time and counting the integers occuring as curvatures of circles tangent to $C$.

Recall that to prove Proposition~\ref{lowbound} we fixed a circle of curvature $a_0$, and associated curvatures of circles tangent to it with the set of integers (without multiplicity) represented by $f_{a_0}(x,y)-{a_0}$.  We denote the set of these integers that are less than $X$ by $\mathcal A_0$:
$$\mathcal A_0 = \{a\in\mathbb N\, | \, a\leq X, f_{a_0}(x,y)-a_0 = a \mbox{ for some integers } x,y\geq 0\}$$
For every $a\in\mathcal A_0$ we use the method in Section~\ref{prlow} to produce another shifted binary quadratic form
$$f_a(x,y)-a$$
of discriminant $-D=-4a^2$.  As in Section~\ref{prlow}, we wish to count the integers represented by these new forms.  For each $a\in\mathcal A_0$, let $S_a$ denote the set of integers less than $X$ represented by $f_a-a$:
$$S_a=\{n\in \mathbb N\, |\, n\leq X, n=f_a(x,y)-a\; \mbox{ for some relatively prime integers }x,y\geq 0\}$$
Note that the sets $S_a$ depend only on $a_0$, the curvature of $C_0$.  One important consideration in counting the integers represented by the forms $f_a$ is that their discriminants can be very large with respect to $X$, and thus many of the represented integers may be $>X$.  In particular, the count in Lemma~\ref{density} is not uniform in $D$ so we use more recent results of Blomer and Granville in \cite{bg} which specify how the number of integers less than $X$ represented by a binary quadratic form depends on the size of the discriminant of the form\footnote[2]{The results of Blomer and Granville concern quadratic forms of square-free determinant, but the authors note in section 9.3 of \cite{bg} that the same can be done for binary quadratic forms of non-fundamental determinant as well.}.

With this notation, the bounds in \cite{bg} yield a {\it lower} bound on $\sum_{a}|S_a|$ for the $a$'s we consider.  We also compute an {\it upper} bound on $\sum_{a,a'}|S_a\cap S_{a'}|$ for $a\not=a'$ so that
$$\sum_a |S_a|-\sum_{a,a'}|S_a\cap S_{a'}|$$
gives a lower bound for $\kappa(P,X)$.  A crucial ingredient to computing this and proving Theorem~\ref{iteratedens} (that the integers appearing as curvatures in a given ACP make up a positive fraction of $\mathbb N$) is the balance between these lower and upper bounds -- for example, the more sets $S_a$ we choose to include in our count, the bigger the lower bound on $\sum_a |S_a|$.  However, choosing too many such sets will also increase the upper bound on the second sum $\sum_{a,a'}|S_a\cap S_{a'}|$.  In fact, it is possible to choose so many sets $S_a$ that the upper bound on the intersections outweighs the lower bound on the sizes of $S_a$.  In Section~\ref{union} we specify how we choose the $a$'s used in our computation, and compute the first sum, $\sum_a |S_a|$.  In Section~\ref{intersect}, we compute an upper bound on $\sum_{a,a'}|S_a\cap S_{a'}|$ for $a\not=a'$ to prove Theorem~\ref{iteratedens}.

\subsection{Integers represented by multiple binary quadratic forms}\label{union}

In this section, we evaluate the sum $\sum_a |S_a|$, choosing $a$'s in a subset of $\mathcal A_0$ in order to ensure that we obtain a positive fraction of $X$ in our final count.  Specifically, we consider $a\in \mathcal A_0$ such that
$$(\log X)^2\leq a \leq (\log X)^3$$
This interval is chosen to give us the desired lower bounds in conjunction with results in \cite{bg} -- this will become clear in the computations preceding (\ref{blomgranderive}).  We would like to further reduce the set of $a$'s we consider so that the bounds on the size of the intersections of sets $S_a$ are not too large.  To do this, we first partition the interval $[(\log X)^2,(\log X)^3]$ into dyadic ranges $[2^k, 2^{k+1}]$ and select $a$'s within these ranges.

Namely, we consider $\mathcal A_0\cap [2^k, 2^{k+1}]$ where $(\log X)^2\leq 2^k, 2^{k+1} \leq (\log X)^3$.  The size of this set depends only on $a_0$, the curvature of the original circle we fixed.  By Lemma~\ref{density}, we have
\begin{equation}\label{sizeA0}
\left|\mathcal A_0\cap [2^k, 2^{k+1}]\right| \gg \frac{2^k}{\sqrt{k}}
\end{equation}
where the implied constant depends on $a_0$.  We partition each dyadic interval $[2^k,2^{k+1}]$ into intervals $[2^k+n\cdot\eta\frac{2^k}{\sqrt k}, 2^k+(n+1)\cdot\eta\frac{2^k}{\sqrt k}]$ of length $\eta\frac{2^k}{\sqrt k}$, where $0<\eta<1$ is a fixed parameter whose importance will become apparent in Proposition~\ref{intersectionlemma}.  We note that the average over $0\leq n \leq \sqrt{k}\eta^{-1}$ of cardinalities of the corresponding subsets of $\mathcal A_0$ is
\begin{equation}\label{intersectabove}
E_{n}\left(\Bigl|\mathcal A_0\cap [2^k+n\cdot\eta\frac{2^k}{\sqrt k}, 2^k+(n+1)\cdot\eta\frac{2^k}{\sqrt k}]\Bigr|\right)\gg \eta \frac{2^k}{k}
\end{equation}
by (\ref{sizeA0}).  Thus for every value of $k$ there exists an $0\leq n \leq \sqrt{k}\eta^{-1}$ for which the intersection in (\ref{intersectabove}) contains $\gg \eta \frac{2^k}{k}$ integers.  For simplicity of notation, we assume without loss of generality\footnote[1]{One can in fact extend Lemma~\ref{density} to show that this holds for \emph{every} $n$.  Friedlander and Iwaniec do this for $a_0=1$ in Theorem 14.4 of \cite{FI}.  However, it is not necessary here.} that $n=0$, and define $\mathcal A^{(k)}$ to be
\begin{equation}\label{defineAk}
\mathcal A^{(k)} = \mathcal A_0 \cap [2^k, 2^k+\eta\frac{2^k}{\sqrt k}]
\end{equation}
where we have
\begin{equation}\label{sizeAk}
|\mathcal A^{(k)}|= \eta \frac{2^k}{k}
\end{equation}
up to a constant which depends only on $a_0$.  Denote the union of these subsets by $\mathcal A$:
\begin{equation}
\mathcal A=\bigcup\mathcal A^{(k)}
\end{equation}
The results in \cite{bg} imply the following lemma regarding the integers represented by quadratic forms associated with $a\in \mathcal A$.
\begin{lemma}\label{sumsa1}
Let $\mathcal A$ and $f_a$ be as before.  Then we have
\begin{equation*}
\sum_{a\in \mathcal A}|S_a|\quad\gg\quad\eta X
\end{equation*}
\end{lemma}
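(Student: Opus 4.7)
The plan is to apply the Blomer--Granville estimate from \cite{bg} (extended to non-fundamental discriminants as noted in the footnote) to each form $f_a$ individually, and then sum the resulting pointwise bounds over $a \in \mathcal{A}$. Lemma~\ref{density} is not sufficient here because its implied constant depends on the discriminant $-D$, whereas $D = 4a^2$ varies with $a$ and can be as large as a small power of $\log X$; we therefore require the uniformity in $D$ supplied by \cite{bg}.

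First, for each $a \in \mathcal{A}$ I would apply the Blomer--Granville lower bound to the primitive positive-definite form $f_a$ of discriminant $-4a^2$ to obtain
\begin{equation*}
|S_a| \;\gg\; \frac{X}{\sqrt{\log X}} \cdot c(f_a),
\end{equation*}
where $c(f_a)$ is the singular-series-type constant attached to the form. The choice $a \leq (\log X)^3$ is exactly what forces $D = 4a^2 \leq 4(\log X)^6$, so $D$ is only a tiny power of $\log X$ and the Blomer--Granville main term is the dominant contribution at $Y = X$. The boundary effect from $|S_a|$ counting values of $f_a - a$ rather than $f_a$ is negligible since the shift $a$ is at most $(\log X)^3 \ll X$.

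Next, I would sum these bounds over $a \in \mathcal{A}$. By (\ref{sizeAk}) every block satisfies $|\mathcal{A}^{(k)}| \gg \eta \cdot 2^k / k$, and in particular the top block (with $2^k \asymp (\log X)^3$) alone yields $|\mathcal{A}| \gg \eta (\log X)^3 / \log \log X$. Combined with a uniform lower bound $c(f_a) \gg (\log \log X)^{-O(1)}$ across $\mathcal{A}$, this gives
\begin{equation*}
\sum_{a \in \mathcal{A}} |S_a| \;\gg\; \eta \cdot \frac{X}{\sqrt{\log X}} \cdot \frac{(\log X)^3}{(\log \log X)^{O(1)}} \;\gg\; \eta X,
\end{equation*}
which is the claim, with implied constant depending only on the packing $P$ through $a_0$.

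The main obstacle is verifying that the constants $c(f_a)$ do not degenerate as $a$ grows within $\mathcal{A}$. Since $f_a$ has discriminant $-(2a)^2$, its represented integers are all sums of two squares (as noted in the introduction), and one must argue that each $f_a$ captures a non-negligible proportion of such integers uniformly for $a \in \mathcal{A}$. In the regime $D \leq X^{o(1)}$ this is precisely the uniformity statement made explicit in \cite{bg}, and the crude factor-of-$(\log \log X)^{O(1)}$ loss allowed above is more than offset by the $(\log X)^3 / \log \log X$ coming from the size of $\mathcal{A}$; indeed this strategy in fact produces the much stronger estimate $\sum_a |S_a| \gg \eta X (\log X)^{5/2} / (\log \log X)^{O(1)}$, of which the lemma retains only the weak form $\gg \eta X$ needed for the intersection argument in the next subsection.
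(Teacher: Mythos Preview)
Your pointwise lower bound is wrong, and the conclusion you draw from it is impossible. With $a\in\mathcal A$ we have $D=4a^2\in[(\log X)^4,(\log X)^6]$; for such $D$ the Blomer--Granville parameter $\kappa\sim\tfrac{1}{2}\log D/\log\log X$ lies in the \emph{third} range $\kappa\geq 1$, not the first. In that range Theorem~2 of \cite{bg} (stated here as Lemma~\ref{blomgran}) gives the asymptotic
\[
U_{f_a}^0(X)\;=\;\pi\Bigl(1-\tfrac{1}{2u}\Bigr)\frac{X}{\sqrt D}\;+\;O\!\left(\frac{X}{D^{3/4-\epsilon}}\right),
\]
so $|S_a|\asymp X/\sqrt D\asymp X/a$. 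For $a$ near the top of the window this is as small as $X/(\log X)^3$, which flatly contradicts your claimed bound $|S_a|\gg X/\sqrt{\log X}\cdot(\log\log X)^{-O(1)}$. The James-type estimate $X/\sqrt{\log X}$ with a singular-series constant $c(f_a)$ is the \emph{first}-range behaviour and does not persist once $D$ exceeds a bounded power of $\log X$; your assertion $c(f_a)\gg(\log\log X)^{-O(1)}$ is exactly where the argument breaks. Consequently your ``much stronger estimate'' $\sum_a|S_a|\gg\eta X(\log X)^{5/2}/(\log\log X)^{O(1)}$ is false; indeed $\sum_a|S_a|\asymp\eta X$ with matching upper and lower bounds.

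The paper's proof uses the correct third-range bound $|S_a|\gg X/a$ and then sums: for each dyadic block $\mathcal A^{(k)}$ one has $\sum_{a\in\mathcal A^{(k)}}1/a\asymp|\mathcal A^{(k)}|/2^k\asymp\eta/k$ by (\ref{sizeAk}), and summing over the $\asymp\log\log X$ values of $k$ with $2\log\log X\lesssim k\lesssim 3\log\log X$ gives $\sum_k\eta/k\asymp\eta$, hence $\sum_{a\in\mathcal A}|S_a|\gg\eta X$. Note that this is tight: one genuinely needs the full range of $k$ (not just the top block) to accumulate the constant, and there is no slack of any power of $\log X$.
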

To prove Lemma~\ref{sumsa1}, we recall the notation and relevant theorem from \cite{bg}.  Let $f$ be a binary quadratic form of discriminant $-D$, and let $r_f(n)$ be the number of representations of $n$ by $f$:
\begin{equation}\label{repnumber}
r_f(n) = \#\{(m_1,m_2)\in \mathbb Z^2-\{\mathbf 0\}\, |\; gcd(m_1,m_2)=1, \,f(m_1,m_2) = n\}
\end{equation}
Let $U_f^{0}(X) = \sum_{n\leq X} r_f(n)^{0}$, the number of integers less than $X$ represented by $f$, counting without multiplicity.  In \cite{bg}, Blomer and Granville compute bounds for $U_f^{0}(X)$ for $D$ in three ranges between $0$ and $X$.  These ranges are defined in terms of the class number $h$ of the binary quadratic form $f$ and by $g$, the number of genera.  Letting $\ell = \ell_{-D}=L(1,\chi_{-D})(\phi(D)/D)$, they create a parameter
$$\kappa = \frac{\log(h/g)}{(\log 2)(\log(\ell_{-D}\log X))}$$
where $h/g = D^{\nicefrac{1}{2}+\textrm{o}(1)}$.   Their bounds for $U_f^{0}(X)$ are then uniform in $D$ for each range below (see Lemma~\ref{blomgran}):
\begin{itemize}
\item $0\leq \kappa \leq \nicefrac{1}{2}$
\item $\nicefrac{1}{2} <\kappa < 1$
\item $1\leq \kappa \ll \frac{\log D}{\log\log D}$
\end{itemize}
In the first and last range, they are able to compute both an upper and lower bound on $U$.  However they prove only an upper bound for $U_f^{0}(X)$ in the case that $D$ is in the middle range, which is not suitable for our purposes.  The lower bound for $U_f^0(X)$ for a form $f$ of discriminant $-D$ where $D$ is in the smallest range is essentially James' result in Lemma~\ref{density}, and is used to show that
$$\kappa(P,X)\gg \frac{X}{(\log X)^{\epsilon}}$$
in \cite{Fuchspreprint}.  Our results and the statement in Lemma~\ref{sumsa1} depend on Blomer and Granville's lower bound for $U_f^0(X)$ where $f$ is of dicriminant $-D$ and $D$ is in the third range above.  Specifically, we use Theorem 2 from \cite{bg}, which is summarized in the lemma below.
\begin{lemma}(Blomer, Granville):\label{blomgran}
Let $f$ be a binary quadratic form of discriminant $-D$, and let $U_f^0(X)$ be as before.  Let $\mathcal G$ be the group of genera of binary quadratic forms of discriminant $-D$.  Denote by $s$ the smallest positive integer that is represented by $f$, and by $u$ the smallest positive integer represented by some form in the coset $f\mathcal G$.  Then
\begin{equation}\label{boundit}
U_f^0(X) = \pi\cdot \left(1-\frac{1}{2u}\right)\cdot \frac{X}{\sqrt D} + E_0(X, D)
\end{equation}
where
\begin{equation}\label{errorit}
E_0(X,D) \ll \sqrt{\frac{X}{s}} + \tau(D)\cdot \left(\frac{X\log X}{D}+\frac{X}{D^{\frac{3}{4}}}\right)
\end{equation}
where $\tau(D)$ is the number of prime divisors of $D$, and the implied constant does not depend on $D$.
\end{lemma}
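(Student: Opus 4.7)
My approach would exploit the classical dictionary between proper representations $f(m_1,m_2)=n$ with $\gcd(m_1,m_2)=1$ and invertible ideals $\mathfrak{a}$ of norm $n$ in the quadratic order $\mathcal{O}_{-D}$ whose class corresponds to $[f]$ under the form-class-group isomorphism. First, the count ``with multiplicity'' $R_f(X):=\sum_{n\le X}r_f(n)$ is handled by a lattice-point estimate inside the ellipse $\{f(x,y)\le X\}$, whose area is $2\pi X/\sqrt{D}$; restricting to primitive pairs via M\"obius inversion then yields
\[R_f(X)\;=\;\frac{2\pi X}{\zeta(2)\sqrt{D}}+O(\sqrt{X/s}),\]
where the boundary error scales with the eccentricity of the ellipse, governed by $1/s$ with $s$ the minimum of $f$; this is the origin of the first term in the error (\ref{errorit}).

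To pass from $R_f$ to $U_f^0$ I would use orthogonality in the narrow class group. For $\gcd(n,D)=1$,
\[r_f(n)\;=\;\frac{w}{h}\sum_{\chi\in\widehat{\mathrm{Cl}}}\bar\chi([f])\sum_{N\mathfrak{a}=n}\chi(\mathfrak{a}),\]
and each such $n$ lies in a single coset of the principal genus, determined by its Kronecker symbols at the primes dividing $D$. The total ideal count of norm $n$ in that coset is $2^{\omega_{-D}(n)}$, spread essentially uniformly over the $h/g$ classes it contains. In our regime $\kappa\ge 1$, $h/g$ is enormous relative to the typical $2^{\omega(n)}\asymp (\log X)^{\log 2}$, so most representable $n$ are hit by $f$ at most once; the indicator $\mathbf{1}[r_f(n)\ge 1]$ is then essentially $r_f(n)$ itself on its support, producing the main term $\pi(1-\tfrac{1}{2u})X/\sqrt{D}$. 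The non-principal characters yield the remaining error: Polya--Vinogradov and Burgess-type estimates applied to $\chi_{-D}$-twisted divisor sums give an $\tau(D)$ factor, tracking the local contributions at primes dividing $D$, together with a $D^{-3/4}$ Burgess saving on short character sums, producing $\tau(D)(X\log X/D+X/D^{3/4})$.

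The constant $(1-\tfrac{1}{2u})$ itself emerges from a careful bookkeeping of small integers in the coset: the minimum $u$ representable by the genus forces a $1/(2u)$ correction to the naive main term $\pi X/\sqrt{D}$, since integers of shape $u\cdot(\text{small})$ are represented by some form in the genus but, in a controlled proportion, not by $f$ itself. The main obstacle will be the uniform passage from ``with multiplicity'' to ``without multiplicity'' as $D$ varies: one must control, for each $n$ in the correct coset, the probability that \emph{none} of its $2^{\omega(n)}$ ideals lands in the class $[f]$. This demands a dyadic split according to $\omega(n)$---deterministic pigeonhole bounds when $\omega(n)$ is small, and character-sum equidistribution in the class group when $\omega(n)\approx\log\log X$---and these pieces must be glued together so that the sharp constant $(1-\tfrac{1}{2u})$ survives in the final main term.
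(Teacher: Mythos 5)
You should know first that the paper does not prove this statement at all: it is imported verbatim as Theorem~2 of Blomer--Granville \cite{bg}, and the citation is the entire argument (the surrounding text only explains which of their three ranges of $\kappa$ is relevant and why the middle range would not suffice). So your attempt has to be measured against the actual proof in \cite{bg}, and against that standard it is a reasonable sketch of the architecture but not a proof. The part you do carry out --- the lattice-point count $R_f(X)=\frac{2\pi X}{\zeta(2)\sqrt D}+O(\sqrt{X/s})$ with the boundary error controlled by the minimum $s$ of the form --- is sound and correctly accounts for the first term of (\ref{errorit}).

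The genuine gaps are in the passage from $R_f$ to $U_f^0$. You assert that the constant $\pi\bigl(1-\frac{1}{2u}\bigr)$ ``emerges from a careful bookkeeping of small integers in the coset,'' but the mechanism you offer (integers of shape $u\cdot(\text{small})$ represented by some form in the genus yet missed by $f$) is not the right one and does not visibly produce that constant: the correction $-\frac{\pi X}{2u\sqrt D}$ quantifies the \emph{over-count} incurred in replacing $\#\{n\le X: r_f(n)>0\}$ by $\frac12\sum_{n\le X}r_f(n)$, i.e. it measures the integers carrying an extra essentially distinct representation, and showing that this excess is governed precisely by $u$, the minimum of the coset $f\mathcal G$, is the heart of the theorem and requires the composition/ideal analysis you defer. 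Likewise, attributing the term $\tau(D)\bigl(\frac{X\log X}{D}+\frac{X}{D^{3/4}}\bigr)$ to P\'olya--Vinogradov and Burgess estimates is hand-waving and, as far as I can tell, wrong: those inputs play no role here; the $\tau(D)$ factor tracks the ramified primes $p\mid D$ and the $D^{-3/4}$ saving comes from bounding the number of $n\le X$ with two essentially distinct representations. That bound, together with the uniform ``multiplicity one for almost all $n$'' statement in the regime $\kappa\ge 1$, is exactly the ``main obstacle'' you acknowledge leaving open --- but it is not an obstacle to be glued in later; it is the content of the result. What you have written is a correct heuristic for why the answer has the shape it does, not a proof of (\ref{boundit})--(\ref{errorit}).
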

With this in mind we are ready to prove Lemma~\ref{sumsa1}.

\noindent{\it Proof of Lemma~\ref{sumsa1}:}

We use Lemma~\ref{blomgran} to count the integers less than $X$ represented by forms of discriminant $-D$ where $D$ is a power of $\log X$ in our case.   Recall that
$$f_a(x,y)=\alpha x^2+2\beta xy +\gamma y^2$$
is of discriminant $-D=-4a^2$.  In particular, since $(\log X)^2\leq a\leq (\log X)^3$, we have that
$$(\log X)^4\leq D\leq (\log X)^6$$
and the number of prime divisors of $D$ is
$$\tau(D) \ll \log \log X,$$
and so
$$E_0(X,D) \ll \frac{X}{(\log X)^3}+(\log D)\cdot\frac{X}{D^{\frac{3}{4}}}$$
Thus we have that the error $E_0(X,D)\ll \frac{X}{D^{\nicefrac{3}{4}-\epsilon}}$ for any $\epsilon>0$, and thus Lemma~\ref{blomgran} implies
\begin{equation}\label{blomgranderive}
U_f^0(X) \gg \frac{X}{\sqrt D}
\end{equation}
where the implied constant does not depend on $D$.  Since $D=-4a^2$, it follows from (\ref{blomgranderive}) that the number of distinct values less than $X$ represented by $f_a$ is $\gg \frac{X}{a}$ and we have
\begin{eqnarray}\label{sumsa}
\sum_{a\in \mathcal A}|S_a|&\gg& \sum_{a\in \mathcal A}\frac{X}{a}\nonumber \\
&\gg&\eta\cdot X\cdot\hspace{-0.2in}\sum_{\stackrel{2^k<(\log X)^3}{2^k>(\log X)^2}}\frac{1}{k}\nonumber\\
&\gg&\eta\cdot X
\end{eqnarray}
as desired.
\qed

The sum in Lemma~\ref{sumsa1} is the lower bound on the number of integers we count by considering the quadratic forms associated with $a\in\mathcal A$.  In order to prove Theorem~\ref{iteratedens}, we obtain an upper bound on the number of integers we have counted twice in this way in the next section.
\subsection{Integers in the intersections}\label{intersect}
To prove Theorem~\ref{iteratedens} we would like to show
\begin{equation}\label{uniondef}
\left|\bigcup_{a\in\mathcal A}S_a\right|\gg X
\end{equation}
since this union is a subset of all curvatures less than $X$ in the packing $P$.  By Lemma~\ref{sumsa1}, we may estimate the size of this union as follows:
\begin{eqnarray}\label{unionbig}
\left|\bigcup_{a\in\mathcal A}S_a\right|&\geq& \sum_{a\in \mathcal A}|S_a|-\sum_{a\not=a'\in\mathcal A}\left|S_a\cap  S_{a'} \right|\nonumber\\
&\gg& \eta X -\sum_{a\not=a'\in\mathcal A}\left|S_a\cap  S_{a'} \right|
\end{eqnarray}
We need only to determine an upper bound for the last sum above.  We do this by counting points $(x,y,x'y')$ in a box on the quadric
$$f_a(x,y)-f_{a'}(x',y')=a'-a$$
for each $a\not=a'\in\mathcal A$.  The region in which we count these points is induced by the condition that $f_a(x,y)<X$.  Namely, rewriting the binary form $f_a$ as
\begin{equation}\label{rewriteform}
f_a(x,y)=\frac{(\alpha x+\beta y)^2 +4 a^2 y^2}{\alpha}
\end{equation}
we can define a region
\begin{equation}\label{ball}
B_a=\{(x,y)\in\mathbb R^2\mbox{ s.t. }\,|\alpha x+\beta y|\ll\sqrt{|\alpha|}\mbox{ and } \, |y|\ll\frac{\sqrt{|\alpha|}}{a}\}
\end{equation}
so that $f_a(x,y)\ll 1$ for $(x,y)\in B_a$, and $f_a(x,y)\ll X$ for every $(x,y)\in \sqrt{X}\,B_a$ as desired.  Therefore, the region in $\mathbb R^4$ over which we consider the forms $f_a-f_{a'}$ will be
$$\mathcal B_{a,a'} = (\sqrt{X}B_a\times \sqrt{X}B_{a'})\cap \mathbb Z^4$$
With this notation, we are ready to prove the following proposition.
\begin{prop}\label{intersectionlemma}
Let $\mathcal A$, $S_a$, $S_{a'}$, $\eta$, and $X$ be as before.  Then there exists $c>0$ depending only on $a_0$ such that
\begin{equation}\label{intersum}
\sum_{a\not=a'\in\mathcal A}\left|S_a\cap  S_{a'} \right|\leq c\eta^2X
\end{equation}
\end{prop}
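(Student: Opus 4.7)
The plan is to pass from counting distinct integers in $S_a\cap S_{a'}$ to counting integer points on a quaternary quadric. Any $n\in S_a\cap S_{a'}$ produces at least one quadruple $(x,y,x',y')\in\mathcal B_{a,a'}$ with $f_a(x,y)-f_{a'}(x',y')=a-a'$, so
\begin{equation*}
|S_a\cap S_{a'}|\;\le\;\#\bigl\{(x,y,x',y')\in\mathcal B_{a,a'}\,:\,f_a(x,y)-f_{a'}(x',y')=a-a'\bigr\}.
\end{equation*}
This is a count of lattice points on an indefinite quadric of signature $(2,2)$, sitting inside a box of Euclidean volume $\asymp X^{2}/(aa')$. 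We are now counting representations with multiplicity, which is exactly the sacrifice the introduction warned us about.

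The heart of the argument is a pointwise estimate $|S_a\cap S_{a'}|\ll X/(aa')$, uniform over pairs $a\neq a'\in\mathcal A$. My plan is to rewrite each form via (\ref{rewriteform}) as $f_a(x,y)=\alpha^{-1}\bigl((\alpha x+\beta y)^{2}+(ay)^{2}\bigr)$, and similarly for $f_{a'}$, so that the quaternary equation becomes a signed sum of four squares parameterised by Gaussian integers subject to congruence conditions mod $a$ and mod $a'$ (coming from the conductor-$a$ and conductor-$a'$ orders in $\mathbb Z[i]$). A Hardy--Littlewood circle-method analysis of this form then predicts a main term of order (volume of box)$/$(archimedean scale of $f_a-f_{a'}$)$\asymp(X^{2}/aa')/X=X/(aa')$, multiplied by an $O(1)$ singular series; alternatively one can unfold the equation as a pair of norm equations in $\mathbb Z[i]$ and count directly via Gaussian-integer factorisation.

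Summing the per-pair bound over $\mathcal A\times\mathcal A$ gives
\begin{equation*}
\sum_{a\neq a'\in\mathcal A}|S_a\cap S_{a'}|\;\ll\;X\Bigl(\sum_{a\in\mathcal A}\tfrac{1}{a}\Bigr)^{2}.
\end{equation*}
From (\ref{sizeAk}), $|\mathcal A^{(k)}|\asymp\eta\cdot 2^{k}/k$ and $a\asymp 2^{k}$ on $\mathcal A^{(k)}$, so $\sum_{a\in\mathcal A^{(k)}}1/a\asymp\eta/k$. The index $k$ ranges over an interval of length $\asymp\log_{2}\log X$ centred near $\log_{2}\log X$, so $\sum_{k}1/k=O(1)$ and hence $\sum_{a\in\mathcal A}1/a\ll\eta$. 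This yields $\sum_{a\neq a'\in\mathcal A}|S_a\cap S_{a'}|\ll\eta^{2}X$ as required.

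The main obstacle is the uniformity in the per-pair bound $X/(aa')$. The forms $f_a$ vary both in discriminant $-4a^{2}$ and in the specific class produced by the root quadruple, and one must control the singular series and the archimedean density uniformly for $a,a'\in[(\log X)^{2},(\log X)^{3}]$: in principle, small values of $|a-a'|$ or unusually large values of $\gcd(a,a')$ could inflate the minor-arc contribution. This is precisely why the parameter $\eta$ was built into the definition of $\mathcal A^{(k)}$, thinning the $a$'s so that they do not cluster within a dyadic range and keeping the correlations between distinct $f_a$ and $f_{a'}$ at the predicted $O(1)$ level. Any crude use of Cauchy--Schwarz on representation numbers instead of the circle-method main term would produce spurious $\log X$ factors that would destroy the balance with Lemma~\ref{sumsa1}.
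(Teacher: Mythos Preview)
Your opening moves match the paper exactly: pass to lattice points on the quaternary quadric $F(x,y,x',y')=f_a(x,y)-f_{a'}(x',y')=a-a'$ inside $\mathcal B_{a,a'}$, and analyse this by the circle method so that the main term factors as (singular integral)$\times$(singular series), with the singular integral $\ll X/(aa')$. Where your argument breaks is the claim that the singular series is $O(1)$ uniformly in $a,a'$. It is not. The paper computes (Lemma~\ref{singseries})
\[
\mathfrak S(a-a')\;\ll\;2^{\omega((a,a'))}\prod_{\substack{p\mid aa'(a-a')\\ p\nmid (a,a')}}\Bigl(1+\tfrac1p\Bigr),
\]
and the factor $2^{\omega((a,a'))}$ alone can be as large as $(\log X)^{o(1)}$ for $a,a'\le(\log X)^3$. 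If you simply insert the worst-case singular series and sum $\sum_{a,a'}1/(aa')$ as you propose, you pick up an unbounded factor and do not recover $c\eta^2X$.

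The paper does \emph{not} obtain a uniform pointwise bound $|S_a\cap S_{a'}|\ll X/(aa')$; it keeps the arithmetic weight explicit and then sums it carefully. The sum over $a\ne a'$ is decomposed according to square-free, pairwise coprime moduli $q_0,q_1,q_1',q_2$ with $q_0q_1\mid a$, $q_0q_1'\mid a'$, $q_2\mid a-a'$, and the key extra input is that $\mathcal A$ is equidistributed in progressions: Lemma~\ref{sievebound} (resting on Friedlander--Iwaniec for sums of two squares in progressions, since every $a\in\mathcal A$ is a value of a form of discriminant $-4a_0^2$) gives $\sum_{a\in\mathcal A,\,a\equiv r\,(q)}1/a\ll \eta\,(\log\log q)/q$. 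Summing first over $a'$ in its progression mod $q_0q_1'q_2$, then over $a$ mod $q_0q_1$, produces the $\eta^2$ and a convergent sum $\sum 2^{\omega(q_2)}(\log\log(\cdot))^2/(q_0q_1q_1'q_2)^2$. None of this is in your proposal.

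Finally, your explanation of the role of $\eta$ is off. The thinning by $\eta$ does not tame the singular series (which depends only on the arithmetic of $a,a',a-a'$); it is a pure density parameter. Its purpose is that the lower bound in Lemma~\ref{sumsa1} scales like $\eta X$ while the upper bound here scales like $\eta^2 X$, so that for $\eta$ small the former dominates. The missing idea you need is the equidistribution of $\mathcal A$ in progressions, not any decorrelation coming from $\eta$.
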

Note that, since we chose $0<\eta<1$, we have $\eta^2<\eta$, and so this upper bound on the size of the intersection of the sets $S_a-a$ is small compared to the count in Lemma~\ref{sumsa1}.
\begin{proof}
We note that the expression inside the sum has an upper bound
\begin{eqnarray}\label{upint}
&&\left|S_a\cap S_{a'} \right|\\ &\leq& |\{(x,y,x',y')\in\mathcal{B}_{a,a'}\,|\, f_a(x,y)-f_{a'}(x',y')=a-a'\}|\nonumber
\end{eqnarray}
Although bounding (\ref{intersum}) in this way involves counting the integers in $S_a\cap S_{a'} $ with multiplicity, our analysis shows that this sacrifice is in fact not too expensive to our final count.  We thus consider the quaternary quadratic form
$$F(x,y,x',y')=f_a(x,y)-f_{a'}(x',y')$$
with discriminant $\Delta=(\beta^2-\alpha\gamma)(\beta'^2-\alpha'\gamma')=16a^2(a')^2$.  To obtain an upper bound on the number of points in $\mathbf x\in\mathcal B_{a,a'}$ for which $F(\mathbf x)=a'-a$, one can use the well developed circle method following Kloosterman in \cite{Kloo} and Esterman in \cite{Est} or modular forms (see \cite{drs}).  Both methods would yield what we want -- the latter would give the best results but is not as flexible as the former for our purposes since we wish to vary the parameters $a$ and $a'$ which is more straightforward in the circle method.  Heath-Brown's Theorem 4 in \cite{HB} and Niedermowwe's  Theorem 5.6 in \cite{Niedermowwe} determine representation numbers of a fixed indefinite quadratic form\footnote[3]{Note that in \cite{HB} one considers representations of an integer $m$ by $F$ where $m$ is asymptotic to the scaling factor $P$ of the unscaled domain $B$ (in our case $B=B_a\times B_{a'}$ and $P=\sqrt{X}$), while in \cite{Niedermowwe} $m$ is any nonzero integer.}.  Since our $a,a'$ are all a small power of $\log X$, the proofs of these theorems can be manipulated slightly to yield the following lemma regarding representation numbers of all the indefinite quaternary quadratic forms we consider:
\begin{lemma}\label{niederheath}
Let $F$ be as before, and let $(\log X)^2\leq a,a'\leq (\log X)^3$.  Let $\chi_{a,a'}$ denote the characteristic function on the region $\mathcal{B}_{a,a'}$, and let
$$R_{\chi_{a,a'}}(a-a') = \sum_{\stackrel{\mathbf{x}\in \mathbb Z^4}{F(\mathbf x)=a-a'}}\chi_{a,a'}(\mathbf x).$$
Let $\Delta$ be as above.  Then we have
\begin{eqnarray}\label{errortermlabel}
&&R_{\chi_{a,a'}}(a-a')\nonumber \\
&=&|I_{\chi_{a,a'}}(a-a')|\cdot| \mathfrak S(a-a')| + O\left(\frac{X\cdot \Delta^{100}}{(\log X)^{\lambda}}\right)
\end{eqnarray}
where the first factor is the singular integral
\begin{equation}\label{singint}
I_{\chi_{a,a'}}=\int_{-\infty}^{\infty}\Biggl[\int_{\mathbb R^4}\chi_{a,a'}(x)e(z(F(x)-a+a'))dx\Biggr]dz
\end{equation}
and the second factor is the singular series
\begin{equation}\label{singser}
\mathfrak S(a-a')=\prod_p\sigma_p
\end{equation}
where
\begin{equation}\label{sigma}
\sigma_p=\lim_{k\rightarrow\infty}p^{-3k}\cdot\#\{\mathbf x\in(\nicefrac{\mathbb Z}{p\mathbb Z})^4\mbox{ s.t. }\, F(\mathbf x)\equiv a-a'\,(p^k)\}
\end{equation}
and $e(z)=e^{2\pi i z}$.
\end{lemma}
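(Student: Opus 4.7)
The plan is to follow the Kloosterman-refined circle method of Heath-Brown \cite{HB} and Niedermowwe \cite{Niedermowwe}, tracking how every error term depends on the coefficients of $F$ so that the final bound is uniform in $a,a'$ with only a polynomial loss in $\Delta$. Since the statement is really a uniform-in-discriminant repackaging of results already in the literature, the work is in re-examining the proofs, not in a new analytic idea.

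First I would encode the equation via the Heath-Brown $\delta$-symbol, writing
\[
R_{\chi_{a,a'}}(a-a') \;=\; \sum_{\mathbf x\in\mathbb Z^4}\chi_{a,a'}(\mathbf x)\,\delta\bigl(F(\mathbf x)-(a-a')\bigr),
\]
expanding $\delta$ as an integral over Farey fractions $b/q$ with $q\leq Q$, and splitting into major and minor arcs. On the major arcs, Poisson summation in $\mathbf x$ against a smooth majorant of $\chi_{a,a'}$ (obtained by convolving the indicator of $\mathcal B_{a,a'}$ with a bump at scale depending on $\Delta$, so that derivatives cost only powers of $\Delta$) reassembles the local $p$-adic densities $\sigma_p$ into the singular series (\ref{singser})--(\ref{sigma}) and the archimedean integral into $I_{\chi_{a,a'}}(a-a')$ as in (\ref{singint}). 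On the minor arcs, the exponential sums become complete Kloosterman sums in four variables; since $F$ has rank $4$ and signature $(2,2)$, Weil's bound combined with integration by parts in the attendant oscillatory integral (using the near-diagonal form (\ref{rewriteform}) of each $f_a$) yields a saving over the trivial bound $X/q^2$, and optimizing $Q$ as a suitable power of $\log X$ gives a minor-arc contribution of size $X\cdot\Delta^{O(1)}/(\log X)^{\lambda}$ for any fixed $\lambda>0$.

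The main obstacle is precisely this uniformity in $\Delta$: the cited theorems are stated for a \emph{fixed} quadratic form, so each step — Poisson summation, truncation of Ramanujan sums, integration by parts in the singular integral, and Weil estimates on the minor arcs — must be re-examined to confirm that every loss involves only powers of $\Delta$ and of $\|F\|_\infty\asymp a^2+(a')^2\leq(\log X)^6$, rather than powers of $X$. Because $\Delta\leq(\log X)^{12}$ in our regime, any fixed polynomial loss in $\Delta$ is absorbed by a sufficiently large power of $\log X$ saved on the minor arcs; this is exactly the flexibility of the circle method noted in the paragraph above the lemma, and it is what permits the convenient (non-sharp) $\Delta^{100}$ wrapper in (\ref{errortermlabel}).
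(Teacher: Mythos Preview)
Your proposal is correct and follows essentially the same approach as the paper: the paper does not give a self-contained proof of this lemma but simply invokes Heath-Brown's and Niedermowwe's circle method results and asserts that a careful re-examination of Niedermowwe's argument shows the dependence on $\Delta$ and on the distortion of $B_a\times B_{a'}$ is only polynomial, hence absorbed since $\Delta\ll(\log X)^{12}$. One small discrepancy worth noting: the paper stresses that Niedermowwe's Theorem~5.6 already yields a \emph{power saving in $X$}, so there is no need to choose $Q$ as a power of $\log X$ to manufacture a $(\log X)^\lambda$ saving; rather, one runs the method with the standard choice of $Q$ (comparable to the scale of the variables), obtains a power of $X$ in the error, and then observes that the accumulated $\Delta^{O(1)}$ factors are negligible against it---the weaker $(\log X)^\lambda$ form in (\ref{errortermlabel}) is a deliberate relaxation.
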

In the error term in (\ref{errortermlabel}), $\lambda$ is an arbitrary large fixed constant.  With more effort we can in fact get a power saving here by using modular forms -- while this would yield the best result, the methods in \cite{HB} and \cite{Niedermowwe} suffice.  In particular, the argument in \cite{Niedermowwe} lends itself well to our consideration of the quadratic form $F$, which has a discriminant of size $(\log X)^k$.  The error term in Niedermowwe's Theorem 5.6 consists of a power saving in $X$, and a careful examination of the proof shows that the dependency on the discriminant of the form is absorbed into the error term since it is only logarithmically large -- this is reflected in (\ref{errortermlabel}) via a power of the discriminant $\Delta$ of $F$.  It is similarly important here that the distorsion of $B_a$ and $B_{a'}$ with respect to the standard cube discussed in \cite{HB} and \cite{Niedermowwe} is logarithmic in $X$.

To prove Proposition~\ref{intersectionlemma} it remains to evaluate the singular integral and singular series in (\ref{singint}) and (\ref{singser}).  For a set $P\subset \mathbb R^4$, let $\mathbf V(P)$ denote the measure of $P$.  From the definition of $f_a$ and $B_a$ in (\ref{rewriteform}) and (\ref{ball}), we have
\begin{eqnarray}\label{integral}
I_{\chi_{a,a'}}
&\ll&\lim_{\epsilon\rightarrow 0}\,\frac{1}{\epsilon} \cdot \mathbf V\biggl(\{(x,y,x',y')\in \sqrt{X}B_a\times\sqrt{X}B_{a'}\,|\, |f_a(x,y)-f_{a'}(x',y')-a+a'|<\epsilon\}\biggr)\nonumber\\
&\ll&\lim_{\epsilon\rightarrow 0}\,\frac{1}{\epsilon}\cdot\frac{\epsilon}{\sqrt{|\alpha|X}}\cdot\frac{\sqrt{|\alpha|X}}{a}\cdot\sqrt{\frac{X}{|\alpha'|}}\cdot\frac{\sqrt{|\alpha'|X}}{a'}\nonumber\\
&\ll&\frac{X}{aa'}
\end{eqnarray}
To evaluate the singular series $\mathfrak S(a-a')$ we prove the following lemma.
\begin{lemma}\label{singseries}
Let $\mathfrak S(a-a')$ be the singular series defined in (\ref{singser}).  We have
$$\mathfrak S(a-a')\ll \prod_{\stackrel{p|aa'(a-a')}{p\not| (a,a')}}\Bigl(1+\frac{1}{p}\Bigr)\cdot 2^{\omega((a,a'))}$$
\end{lemma}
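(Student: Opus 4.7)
The plan is to evaluate the local density $\sigma_p$ at each prime $p$ via the standard Kloosterman--Gauss sum expansion and multiply the local factors. The starting point is the identity $\alpha \cdot f_a(x,y) = (\alpha x + \beta y)^2 + a^2 y^2$ (with a symmetric version when $p \mid \alpha$), which shows that over $\mathbb{Z}_p$ the form $f_a$ is equivalent, up to a unit scalar, to $u^2 + a^2 v^2$, and likewise $f_{a'} \sim (u')^2 + (a')^2 (v')^2$. Hence $F$ is $\mathbb{Z}_p$-equivalent to $u^2 + a^2 v^2 - (u')^2 - (a')^2 (v')^2$, and
\begin{equation*}
\sigma_p = \sum_{k \geq 0} p^{-4k} \sum_{\substack{t \pmod{p^k} \\ (t,p)=1}} S_a(t, p^k) \, S_{a'}(-t, p^k) \, e\!\bigl(-t(a-a')/p^k\bigr),
\end{equation*}
where $S_a(t, p^k) = \sum_{x,y \pmod{p^k}} e(t f_a(x,y)/p^k)$ and $e$ is as in Lemma~\ref{niederheath}; the required bound reduces to standard Gauss sum estimates.

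I would partition the primes into four classes. For $p \nmid 2 a a' (a - a')$, the form $F$ is non-singular modulo $p$ and $a - a'$ is a unit, so the classical bound $|S_a(t, p^k)| \leq 2 p^k$ gives $\sigma_p = 1 + O(p^{-2})$, making the Euler product over these primes absolutely convergent. For $p \mid (a-a')$ with $p \nmid 2 a a'$, $F$ remains non-singular modulo $p$ but the right-hand side vanishes; counting zeros of a non-singular quaternary quadric over $\mathbb{F}_p$ (of size $p^3 + O(p^2)$) and lifting via Hensel on the smooth locus yields $\sigma_p \leq 1 + O(1/p)$.

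For $p \mid a a'$ with $p \nmid (a, a')$, I may assume $p \mid a$ and $p \nmid a'$. Then $f_{a'}$ is non-degenerate modulo $p$ while $f_a \equiv u^2 \pmod{p}$ contributes one free $v$-direction. Using the normal form $f_a = u^2 + a^2 v^2$ to split $S_a(t, p^k)$ into a one-variable Gauss sum in $u$ times one in $v$ (which collapses at scales below $p^{2 v_p(a)}$), the excess factor from the $v$-direction is matched by the $p$-adic size of $a^2$ in the renormalization, so $\sigma_p \leq 1 + O(1/p)$. Combined with the previous case, this produces the product over $p \mid a a' (a-a')$, $p \nmid (a, a')$.

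The main obstacle is the final case, $p \mid (a, a')$, where $F$ degenerates in both $v$ and $v'$ modulo low powers of $p$. Let $e = \min(v_p(a), v_p(a'))$; then $F \equiv u^2 - (u')^2 \pmod{p^{2e}}$ with $v, v'$ entirely free, and a naive count of $N_k$ picks up a factor $p^{2k}$ from these directions. The key point is that for $k \leq 2e$ the Gauss sums $S_a(t, p^k)$ and $S_{a'}(-t, p^k)$ each acquire the matching factor $p^k$ (rather than $p^{k/2}$ as for a non-degenerate binary form), so that after renormalization by $p^{-4k}$ the local density $\sigma_p$ is bounded by an absolute constant---essentially by $2$, reflecting the count of zeros of $u^2 - (u')^2 \pmod{p}$ with the two free variables. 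Multiplying over the $\omega((a, a'))$ primes of this type produces the factor $2^{\omega((a, a'))}$, and assembling the four cases yields the stated bound.
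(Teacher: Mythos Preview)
Your overall architecture matches the paper's proof closely: both partition primes into the four divisibility classes $p\nmid aa'(a-a')$, $p\mid(a-a')$ only, $p\mid aa'$ with $p\nmid(a,a')$, and $p\mid(a,a')$, and estimate $\sigma_p$ via Gauss/exponential sums after diagonalising $f_a$. The paper works only at level $k=1$ (asserting that $p^{-3k}N_k$ decreases in $k$) while you expand $\sigma_p$ as the full series over $k$; in the first three cases this is a purely cosmetic difference and your bounds agree with the paper's.

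The gap is in your fourth case. Your ``matching factor'' observation is correct as far as it goes: for $k\le 2e$ with $e=\min(v_p(a),v_p(a'))$, the free $v,v'$ directions inflate $S_a,S_{a'}$ to $p^{3k/2}$ each, so $S_aS_{a'}=p^{3k}$ exactly, and the $k$-th summand becomes $p^{-4k}\cdot p^{3k}\sum_{(t,p)=1}e_{p^k}(-t(a-a'))=p^{-k}c_{p^k}(a-a')$, a Ramanujan sum. But then this equals $1-1/p$ for every $k\le v_p(a-a')$, so summing gives $\sigma_p=(v_p(a-a')+1)(1-1/p)+O(1/p)$, not an absolute constant. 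In particular the claimed bound ``essentially $2$'' does not follow from your argument when $v_p(a-a')\ge 2$; what your computation actually yields is a factor $v_p(a-a')+1$ rather than $2$ at each prime $p\mid(a,a')$. The paper's treatment of this case is different in spirit: it invokes the primitivity of the packing (through the explicit coefficients in~(\ref{coefficients})) to control the rank of $F$ modulo $p$ and then appeals directly to a count of $\mathbb F_p$-points on the quadric, rather than running the Gauss-sum expansion. If you want to repair your argument along the lines you sketched, you need either an extra arithmetic input bounding $v_p(a-a')$ for the $a,a'$ under consideration, or to revert to the paper's level-$1$ count together with the primitivity of $f_a,f_{a'}$.
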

\begin{proof}
We compute an upper bound for the expression in the limit in (\ref{sigma}) by letting $k=1$ since the expression in the limit decreases with $k$.  Note that if $p|(a,a')$, we have $F$ is not degenerate modulo $p$ by the primitivity of the packing and the definition of the coefficients of $f_a$ in (\ref{coefficients}).  Therefore
$$\sigma_p<p^{-3}\cdot\#\{\mathbf x\in(\nicefrac{\mathbb Z}{p\mathbb Z})^4\mbox{ s.t. }\, F(\mathbf x)\equiv 0\; (p)\},$$
and over $\mathbb F_p$, the number of nontrivial representations of $0$ by $F$ is bounded above by $2p^3$ (see \cite{Cassels}, for example), so $\sigma_p$ is bounded above by $2$ in this case.  In the other cases, we use exponential sum estimates.  Taking $k=1$ as before, we have
$$\sigma_p=\frac{1}{p}\sum_{r=0}^{p-1}\Bigl[\sum_{x,y}e_p(rf_a(x,y))\Bigr]\Bigl[\sum_{x',y'}e_p(-rf_{a'}(x',y'))\Bigr]e_p(r(a-a'))$$
where $e_p(z)=\exp(\frac{2\pi i z}{p})$.  There are several cases to consider:
\vspace{0.2in}

\noindent{\it Case 1: $p$ does not divide $aa'(a-a')$}:

If we diagonalize $f_a$ and $f_{a'}$, we obtain
$$\sigma_p= p^3+\frac{1}{p}\sum_{r=1}^{p-1}\Biggl(\frac{\tilde{\alpha}r}{p}\Biggr)^2\Biggl(\frac{\tilde{\alpha}'r}{p}\Biggr)^2p^2e_p(r(a-a'))=p^3+\textrm{o}(p)$$
since $(a-a',p)=1$.
\vspace{0.2in}

\noindent{\it Case 2: $p|a-a'$ and does not divide $aa'$}:

In this case we have $\sigma_p<p^3+\textrm{o}(p^2)$.
\vspace{0.2in}

\noindent{\it Case 3: $p|a$ and $p\not| a'$}:

Diagonalizing $f_{a'}$, we obtain
$$\sigma_p=p^3+\frac{1}{p}\sum_{r=1}^{p-1}\Biggl(\frac{\alpha r}{p}\Biggr)p\sqrt{p}\cdot p\cdot e_p(r(a-a'))<p^3+\textrm{o}(p^2)$$
From these bounds and Lemma~\ref{sievebound}, we obtain the desired result in Lemma~\ref{singseries}.
\end{proof}
Combining our computation of the singular integral in (\ref{integral}) and the bound on the singular series in Lemma~\ref{singseries}, the result of Niedermowwe in Lemma~\ref{niederheath} yields
\begin{equation}\label{referback}
|S_a\cap S_{a'} |\ll \frac{X}{aa'}\cdot\prod_{\stackrel{p|aa'(a-a')}{p\not| (a,a')}}\Bigl(1+\frac{1}{p}\Bigr)\cdot 2^{\omega((a,a'))}
\end{equation}
where $\omega(n)$ is the number of distinct prime factors of $n$.  Thus to evaluate the last sum in (\ref{unionbig}), we count the number of $a\in \mathcal A$ in progressions $a\equiv r$ mod $q$.  To this end, we recall Theorem 14.5 from \cite{FI} of Friedlander and Iwaniec regarding sums of squares in progressions in the following lemma\footnote[4]{Note that the set of integers in the interval $[2^k, 2^{k}+\eta\frac{2^k}{\sqrt{k}}]$ which can be written as sums of two squares contains the $a\in \mathcal A^{(k)}$ in progressions $a\equiv r\, (q)$, since $\mathcal A^{(k)}$ is a set of integers represented by a binary quadratic form of discriminant $-\delta^2$.  This count is therefore an upper bound on what we want.}.
\begin{lemma}{(Friedlander, Iwaniec):}\label{friedithm}
Let $b(n)$ be a characteristic function defined as
\[
b(n)= \left\{ \begin{array}{ll}
                			1& \mbox{if $n = s^2+t^2$ for some $s,t\in\mathbb Z$} \\
                			0 & \mbox{otherwise} \\
                			  \end{array}
                		 \right. \\
\]
and let
$$B(x,q,a) = \sum_{\stackrel{n\leq x}{n\equiv a\, (q)}}b(n)$$
For $2\leq q\leq x$, $(a,q=1)$, and $a\equiv 1$ mod $(4,q)$ we have
$$B(x,q,a) = \frac{c_q}{q}\cdot \frac{x}{\sqrt{\log x}}\Biggl[1+\textrm{O}\Biggl[\left(\frac{\log q}{\log x}\right)^{\frac{1}{7}}\Biggr)\Biggr]$$
where the implied constant is absolute and $c_q\ll \log\log q$ is a positive constant.
\end{lemma}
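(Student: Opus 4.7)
The plan is to combine three ingredients: a convolution identity that linearises the sum-of-two-squares condition, orthogonality of Dirichlet characters to detect the progression modulo $q$, and the Selberg--Delange method to handle a half-integer singularity of the resulting Dirichlet series at $s = 1$.

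First I would invoke the classical fact that $n$ is a sum of two squares if and only if every prime $\equiv 3 \pmod 4$ divides $n$ to an even power, which yields the identity
$$b(n) = \sum_{n = m^{2} k} g(k),$$
where $g$ is the indicator of squarefree integers all of whose prime divisors lie in $\{2\}\cup\{p : p\equiv 1 \pmod 4\}$. The Dirichlet series $G(s) = \sum_{n} g(n) n^{-s}$ factors as $\bigl(\zeta(s) L(s,\chi_{-4})\bigr)^{1/2} H(s)$, with $H$ analytic and non-vanishing in a half-plane past $\mathrm{Re}(s) = 1/2$, so $G(s)$ has a branch-point singularity of order $1/2$ at $s = 1$. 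This is what eventually yields the $x/\sqrt{\log x}$ rate.

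Next I would apply orthogonality of characters to decompose
$$B(x,q,a) = \frac{1}{\phi(q)} \sum_{\chi \bmod q} \bar{\chi}(a) \sum_{\substack{n \le x \\ (n,q)=1}} b(n)\chi(n),$$
and analyse each twisted sum via its twisted Dirichlet series $G_\chi$. The contribution of the principal character, extracted through the Selberg--Delange method applied at the half-integer singularity, produces the main term $(c_{q}/q) \cdot x/\sqrt{\log x}$; the local factor $c_{q}$ records the densities of sums of two squares at primes dividing $q$, and the hypothesis $a \equiv 1 \pmod{(4,q)}$ is needed so that the $2$-adic local factor does not vanish, which is exactly why it appears in the statement. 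Non-principal characters are supposed to contribute only to the error, their twisted series extending analytically past $\mathrm{Re}(s) = 1$ throughout the zero-free regions of $L(s,\chi)$ and $L(s,\chi\chi_{-4})$.

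The main obstacle is achieving uniformity in $q$ all the way up to $q \le x$ with relative error only $(\log q/\log x)^{1/7}$. This forces the Mellin contour to be shifted only a distance of order $1/\log q$ to the left of $\mathrm{Re}(s) = 1$, which is inside the potential range of a Landau--Siegel zero for real characters modulo $q$. One must therefore exclude exceptional real zeros by hand (for instance through the Deuring--Heilbronn repulsion phenomenon), and balance three competing quantities: the width of the usable zero-free region, the order $1/2$ of the branch point, and the ensuing Selberg--Delange error. This optimisation pins down the exponent $1/7$, and I expect the Siegel-zero exclusion together with the uniform handling of the branch cut across all non-principal $\chi$ to be the decisive and most delicate step.
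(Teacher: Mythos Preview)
The paper does not actually prove this lemma: it is quoted verbatim as Theorem~14.5 from Friedlander--Iwaniec's \emph{Opera de Cribro} and used as a black box. So there is no ``paper's own proof'' to compare your proposal against. Moreover, immediately after stating the lemma the authors remark that they need only the \emph{upper bound} $B(x,q,a)\ll (c_q/q)\,x/\sqrt{\log x}$, which follows from an upper-bound sieve and avoids essentially all of the difficulties you outline (Siegel zeros, the delicate contour shift, the balancing that gives the exponent $1/7$).

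That said, your sketch is a recognisable outline of how the full asymptotic is obtained in the literature: the multiplicative structure of $b(n)$, character orthogonality to isolate the progression, and a Selberg--Delange-type analysis at a half-integer singularity are indeed the standard ingredients, and you correctly identify the Siegel-zero issue as the crux for uniformity in $q$. One inaccuracy worth flagging: the role of the hypothesis $a\equiv 1\pmod{(4,q)}$ is not that the $2$-adic local factor would otherwise vanish, but simply that when $4\mid q$ there are no sums of two squares in residue classes $a\equiv 3\pmod 4$, so the asymptotic is trivially zero there; the condition just singles out the non-degenerate classes. For the purposes of this paper, however, all of this is overkill: an upper-bound sieve on the multiplicative function $b$ in progressions already suffices for the application (Lemma~3.7), and that is what the authors point out.
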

We note that the statement in Lemma~\ref{friedithm} is much stronger than what we need -- we require only an upper bound on $B(x,q,a)$, which could be proven using an upper bound sieve.  Since our set $\mathcal A$ is obtained via the fixed quadratic form of discriminant $-4a_0^2$ from Section~\ref{prlow}, such an upper bound implies the following in our case.
\begin{lemma}\label{sievebound}
Let $\mathcal A$, $X$, and $\eta$ be as before.  Then we have
\begin{equation*}
\sum_{\stackrel {a\in\mathcal A}{a\equiv r\,(q)}} \frac{1}{a}\ll \frac{\log\log q}{q}\cdot \eta
\end{equation*}
where $1<q<\log X$ is a square-free integer.
\end{lemma}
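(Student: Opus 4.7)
\textbf{Proof proposal for Lemma~\ref{sievebound}.} The plan is to split the sum over $\mathcal{A}$ into its dyadic pieces $\mathcal{A}^{(k)}$, replace $1/a$ by its essentially constant value $2^{-k}$ on each piece, and then reduce the counting problem to counting sums of two squares in an arithmetic progression so that the input from Friedlander--Iwaniec (Lemma~\ref{friedithm}) can be applied.

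First I would write
\[
\sum_{\substack{a\in\mathcal A\\ a\equiv r\,(q)}}\frac{1}{a}
\;=\;\sum_{k}\sum_{\substack{a\in\mathcal A^{(k)}\\ a\equiv r\,(q)}}\frac{1}{a}
\;\ll\;\sum_{k}\frac{1}{2^{k}}\cdot\Bigl|\bigl\{a\in\mathcal A^{(k)}:a\equiv r\,(q)\bigr\}\Bigr|,
\]
where $k$ ranges over $2\log_{2}\log X\le k\le 3\log_{2}\log X$. By construction $a\in\mathcal A^{(k)}$ means $a+a_{0}$ is represented by the form $f_{a_{0}}$, and since $f_{a_{0}}$ has discriminant $-(2a_{0})^{2}$, every such $a+a_{0}$ is a sum of two squares. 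Thus the set in question is contained in the set of $n=a+a_{0}$ in the interval $[2^{k}+a_{0},\,2^{k}+a_{0}+\eta\,2^{k}/\sqrt{k}]$ which are sums of two squares and satisfy $n\equiv r+a_{0}\,(q)$.

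Next I would apply Lemma~\ref{friedithm} with $x\asymp 2^{k}$ to bound this count. The relevant hypotheses $(r+a_{0},q)=1$ and $r+a_{0}\equiv 1\pmod{(4,q)}$ may be imposed at the outset, since otherwise the congruence class either contains no sums of two squares or can be reduced to one that does by a trivial factor; in any case only an upper bound is needed, so (as the authors note after the lemma) an upper-bound sieve would suffice. Using $\sqrt{\log 2^{k}}\asymp\sqrt{k}$ and $c_{q}\ll\log\log q$, Friedlander--Iwaniec gives
\[
\Bigl|\{a\in\mathcal A^{(k)}:a\equiv r\,(q)\}\Bigr|\;\ll\;\frac{c_{q}}{q}\cdot\frac{\eta\,2^{k}/\sqrt{k}}{\sqrt{k}}\;\ll\;\frac{\log\log q}{q}\cdot\frac{\eta\,2^{k}}{k}.
\]

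Substituting this into the dyadic sum cancels the $2^{k}$ and yields
\[
\sum_{\substack{a\in\mathcal A\\ a\equiv r\,(q)}}\frac{1}{a}\;\ll\;\frac{\log\log q}{q}\cdot\eta\cdot\sum_{k=\lceil 2\log_{2}\log X\rceil}^{\lfloor 3\log_{2}\log X\rfloor}\frac{1}{k}\;=\;\frac{\log\log q}{q}\cdot\eta\cdot\log(3/2)+O\!\Bigl(\tfrac{1}{\log\log X}\Bigr),
\]
which is $\ll\frac{\log\log q}{q}\,\eta$, as claimed. The only real subtlety is the first step: making precise that $\mathcal A^{(k)}$, reduced modulo $q$, is controlled by sums of two squares in a shifted progression, which requires a brief verification that the congruence conditions $(r+a_{0},q)=1$ and $r+a_{0}\equiv 1\pmod{(4,q)}$ are either satisfied or can be safely reduced to; no other step involves substantial work once Friedlander--Iwaniec is in hand.
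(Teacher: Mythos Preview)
Your proposal is correct and follows essentially the same route as the paper's own proof: decompose $\mathcal A$ into its dyadic pieces $\mathcal A^{(k)}$, replace $1/a$ by $2^{-k}$, bound the count in each piece by invoking Friedlander--Iwaniec on sums of two squares in a progression (using that values of $f_{a_0}$ are sums of two squares, as noted in the paper's footnote), and then sum the resulting $\eta\,c_q/(qk)$ over the range $k\asymp\log\log X$ to get $\ll\eta(\log\log q)/q$. Your write-up is in fact slightly more explicit than the paper's about the shift by $a_0$ and about the harmonic sum over $k$ collapsing to $\log(3/2)+o(1)$.
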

\begin{proof}
With the definition of $\mathcal A^{(k)}$ in (\ref{defineAk}), we may bound above the sum in Lemma~\ref{sievebound} as a sum over $k$ for which $(\log X)^2\leq 2^k, 2^{k+1}\leq (\log X)^3$:
\begin{equation}\label{atok}
\sum_k\frac{1}{2^k}\sum_{\overset{a\in \mathcal A_0}{\underset{a\equiv r\, (q)}{a\in[2^k,2^k+\eta\frac{2^k}{\sqrt{k}}]}}} 1
\end{equation}
By Lemma~\ref{friedithm}, the inner sum is bounded above (up to a constant) by
$$\eta\frac{c_q}{q}\frac{2^k}{k}\Biggl[1+\textrm{O}\Biggl(\left(\frac{\log q}{\log\log q}\right)^{\frac{1}{7}}\Biggr)\Biggr]$$
Since $c_q\ll \log\log q$, substituting this into (\ref{atok}) we have
$$\sum_k\eta\frac{c_q}{q{k}}\Biggl[1+\textrm{O}\Biggl(\left(\frac{\log q}{\log\log q}\right)^{\frac{1}{7}}\Biggr)\Biggr]\ll \eta\frac{\log\log q}{q}$$
as desired.
\end{proof}
With this in mind, we may evaluate the sum in (\ref{referback}) as follows.
\begin{eqnarray}
\sum_{a\not=a'\in\mathcal A}|S_a\cap S_{a'} |&\ll&X\cdot\sum_{a\not=a'\in\mathcal A}\frac{1}{aa'}2^{\omega((a,a'))}\prod_{\stackrel{p|aa'(a-a')}{p\not| (a,a')}}\Bigl(1+\frac{1}{p}\Bigr)\label{intline1}\\
&\ll&X\cdot\sum_{q_0,q_1,q_1',q_2}\frac{2^{\omega(q_2)}}{q_1q_1'q_2}\sum_{\overset{q_0q_1|a}{\underset{q_2|a-a'}{q_0q_1'|a'}}}\frac{1}{aa'}\label{intline2}
\end{eqnarray}
where $q_0,q_1,q_1',q_2$ are square-free and relatively prime.  We may restrict to primes $p<(\log X)^{\frac{1}{100}}$ in the product in (\ref{intline1}), we may restrict in (\ref{intline2}) the summation to $q_0,q_1,q_1',q_2<(\log X)^{\frac{1}{10}}$.  We bound the sum
$$\sum_{\overset{q_0q_1|a}{\underset{q_2|a-a'}{q_0q_1'|a'}}}\frac{1}{aa'}$$
using Lemma~\ref{sievebound}.  First fix $a$ and sum over $a'$ subject to the restrictions $q_0q_1'|a'$ and $a\equiv a'$ mod $q_2$.  From Lemma~\ref{sievebound}, we have
$$\sum_{\overset{a'\in\mathcal A}{\underset{q_2|a-a'}{q_0q_1'|a'}}}\frac{1}{a'}\ll \frac{\log\log (q_0q_1'q_2)}{q_0q_1'q_2}\cdot\eta$$
and
$$\sum_{q_0q_1|a}\frac{1}{a}\ll\frac{\log (q_0q_1)}{q_0q_1}\cdot \eta$$
so
\begin{equation}\label{finalint}
\sum_{\overset{q_0q_1|a}{\underset{q_2|a-a'}{q_0q_1'|a'}}}\frac{1}{aa'}\ll \frac{(\log\log(q_0+q_1+q_1'+q_2))^2}{q_0^2q_1q_1'q_2}\cdot\eta^2
\end{equation}
Substituting (\ref{finalint}) into (\ref{intline2}) gives the desired bound
\begin{eqnarray}
\sum_{a\not=a'\in\mathcal A}|S_a\cap S_{a'} |&\ll& \eta^2X\sum_{q_0,q_1,q_1',q_2}2^{\omega(q_2)}\cdot\frac{(\log\log(q_0+q_1+q_1'+q_2))^2}{(q_0q_1q_1'q_2)^2}\nonumber\\
&<&c\eta^2X
\end{eqnarray}
\end{proof}
Note that $\eta-\eta^2>0$ since $0<\eta<1$.We may take $\eta$ small enough so that (\ref{unionbig}) and (\ref{sumsa}) imply
$$\left|\bigcup_{a\in\mathcal A}S_a\right|\gg (\eta-c\eta^2)X\gg X$$
as desired.
\qed

We note that the methods used here are easily generalizable to many discrete linear algebraic groups acting on $\mathbb H^3$ with an integral orbit.  If the group contains several Fuchsian subgroups as in the case of the Apollonian group, we may restrict to the orbits of these subgroups as in Section~\ref{prlow}.  We would again utilize the subgroup's preimage in the spin double cover of $\textrm{SO}$ to relate the problem to integers represented by a binary quadratic form.  This would yield a comparable lower bound on the number of integers less than $X$ in the orbit of the group (counted without multiplicity).

\end{document}